\title[Lagrangian analyticity]{Analyticity of Lagrangian trajectories for well posed inviscid incompressible fluid models}
\author{Peter Constantin}
\author{Vlad Vicol}
\author{Jiahong Wu}
\address{Department of Mathematics, Princeton University, Princeton, NJ 08544}
\email{const@math.princeton.edu}
\address{Department of Mathematics, Princeton University, Princeton, NJ 08544}
\email{vvicol@math.princeton.edu}
\address{Department of Mathematics, Oklahoma State University, Stillwater, OK 74078}
\email{jiahong@math.okstate.edu}
\theoremstyle{plain}
\newtheorem{theorem}{Theorem}[section]
\newtheorem*{definition}{Definition}
\newtheorem{lemma}[theorem]{Lemma}
\newtheorem{proposition}[theorem]{Proposition}
\theoremstyle{definition}
\numberwithin{equation}{section}
\renewcommand{\tilde}{\widetilde}
\def\phi{\varphi}
\newcommand{\be}{\begin{equation}}
\newcommand{\eeq}{\end{equation}}
\newcommand\comb[1]{ {1/2 \choose #1} }
\newcommand{\pa}{\partial}
\newcommand{\la}{\label}
\newcommand{\fr}{\frac}
\newcommand{\na}{\nabla}
\def\RR{{\mathbb R}}
\def\NN{{\mathbb N}}
\def\uu{\boldsymbol{u}}
\def\xx{\boldsymbol{x}}
\def\aa{\boldsymbol{a}}
\def\bb{\boldsymbol{b}}
\def\om{\boldsymbol{\omega}}
\def\cc{\boldsymbol{c}}
\def\ee{\boldsymbol{e}}
\def\kk{\boldsymbol{k}}
\def\alp{\boldsymbol{\alpha}}
\def\bet{\boldsymbol{\beta}}
\def\gam{\boldsymbol{\gamma}}
\def\yy{\boldsymbol{y}}
\def\gg{\boldsymbol{g}}
\def\ll{{\ell}}
\def\KK{\boldsymbol{K}}
\def\XX{\boldsymbol{X}}
\def\AA{\boldsymbol{A}}
\def\II{\boldsymbol{I}}
\begin{document}

%%%%%%%%%%%%%%%%%%%%%%%% Abstract %%%%%%%%%%%%%%%%%%%%

\begin{abstract}
We discuss general incompressible inviscid models, including the Euler equations, the surface quasi-geostrophic equation, incompressible porous medium equation, and Boussinesq equations. All these models have classical unique solutions, at least for short time. We show that they have real analytic Lagrangian paths. More precisely, we show that as long as a solution of any of these equations is in a class of regularity that assures H\"{o}lder continuous gradients of velocity, the corresponding Lagrangian paths are real analytic functions of time. The method of proof is conceptually straightforward and general, and we address the combinatorial issues head-on.
\hfill \today.
\end{abstract}

\maketitle

%%%%%%%%%%%%%%%%%%%%%%%%%% The Main Part %%%%%%%%%%%%%%%%%%%%%%%%%%%%%%%%%%
\section{Introduction}
\label{sec:intro}

Analyticity of Lagrangian paths of solutions of incompressible Euler equations is a classical subject. Propagation of real analyticity in space and time, from analytic initial data, and for as long as the solution exists, has been amply investigated~\cite{BardosBenachourZerner76,BardosBenachour77,AlinhacMetivier86,Delort85,LeBail86,LevermoreOliver97,KukavicaVicol09,KukavicaVicol11b,Zheligovsky11,Sawada13}. The smoothness or real analyticity of Lagrangian paths without having analytic Eulerian data is quite a different subject from propagation of analyticity. This subject has been addressed in the past~\cite{Lichtenstein25,Chemin92,Gamblin94,Serfati95a,Serfati95b,Serfati95c,Chemin98,Kato00}, and has recently generated renewed interest~{\cite{Sueur11,GlassSueurTakahashi12,Shnirelman12,FrischZheligovsky12,Isett13,Nadirashvili13,FrischZheligovsky13,FrischVillone14}}. The remarkable property of smoothness of the Lagrangian paths in this system holds even when the Eulerian variables (velocity, pressure) have a limited degree of smoothness. A relatively low degree of smoothness of the Eulerian variables is maintained through the evolution if it is initially present, because the equations, when well posed, are time-reversible. Consequently, the real analyticity of Lagrangian paths in such circumstances is all the more remarkable. An interesting example of the distinct degrees of smoothness of Eulerian and Lagrangian variables is provided in the recent works~\cite{Isett12,DeLellisSzekelyhidiBuckmaster13}, which concern a rough enough Eulerian setting  for non-uniqueness. The purpose of this paper is to show that the real analyticity of Lagrangian paths of solutions of
hydrodynamic models is a general property which occurs naturally when the Eulerian velocities are slightly smoother than Lipschitz, and follows from a uniform chord-arc property of the paths using singular integral calculus.

The Lagrangian paths of any fluid model with velocities $\uu(\xx,t)$, with $\xx\in \RR^d$ and
$t\in \RR$ are defined by ordinary differential equations
\begin{align}
&\fr{d\XX}{dt} = \uu(\XX,t),\la{xeq}\\
&\XX(\aa,0) =\aa.
\end{align}
We refer to $\aa\in \RR^d$ as a ``label'' because it marks the initial point on the path $\aa\mapsto \XX(\aa,t)$. The gradient of the path obeys
\be
\fr{d}{dt}(\na \XX) = (\na \uu)(\na\XX)
\la{nexeq}
\eeq
with initial data the identity matrix. As long as $\uu$ is Lipschitz, we have
\be
\sup_{\aa\in \RR^d}|\na\XX(\aa,t)| \le \exp{\int_0^t \|\na \uu\|_{L^{\infty}}dt }
\la{naxup}
\eeq
where we denote by $\left |\cdot \right |$ the norm of the matrix. The maps $\XX$ are $C^{1,\gamma}$ and invertible if $\uu$ is in $L^1(0,T; C^{1,\gamma})$, and the inverse, the ``back-to-labels'' map $\AA(\xx,t) = \XX^{-1}(\xx,t)$ obeys
\be
\pa_t \AA + u\cdot\na \AA = 0,
\la{aeq}
\eeq
with initial data $\AA(\xx,0)=\xx$. Incompressibility is not needed for this to hold. The gradients obey 
\be
\pa_t (\na \AA) + \uu\cdot\na (\na \AA) + (\na \AA)(\na \uu) = 0,
\la{gradaeq}
\eeq
with initial data the identity matrix, and with $(\na \AA)(\na \uu)$ the matrix product. Therefore
\be
\sup_{\xx\in \RR^d}|\na \AA(\xx,t)| \le \exp{\int_0^t \|\na \uu\|_{L^{\infty}}dt}
\la{naxdown}
\eeq
follows by integrating on characteristics. Because
\[
\aa-\bb = \AA(\XX(\aa,t),t) - \AA(\XX(\bb,t),t)
\]
it follows from \eqref{naxdown} that
\[
|\aa -\bb| \le |\XX(\aa,t)-\XX(\bb,t)|\exp{\int_0^t \|\na \uu\|_{L^{\infty}}dt},
\]
and because
\[
\XX(\aa,t)-\XX(\bb,t) = \int_0^1 \fr{d}{ds}\XX((1-s)\aa +s\bb, t)ds
\]
it follows from \eqref{naxup} that
\[
|\XX(\aa,t) -\XX(\bb,t)| \le |\aa -\bb|\exp{\int_0^t \|\na \uu\|_{L^{\infty}}dt}.
\]
We have thus the chord-arc condition
\be
\lambda^{-1}\le \fr{|\aa-\bb|}{|\XX(\aa,t)-\XX(\bb,t)|} \le \lambda
\la{ac}
\eeq
where
\be
\lambda = \exp{\int_0^t \|\na \uu\|_{L^{\infty}}dt}.
\la{lambda}
\eeq
This condition holds for any fluid system, as long as the velocities are Lipschitz, even if the fluid is compressible. Time analyticity of paths will be discussed here only in the incompressible case, for convenience, but the proofs are the same for compressible equations, modulo differentiating the Jacobian of the path map. 

We consider here one of the following equations: the 2D surface quasi-geostrophic equation (cf.~\eqref{eq:SQG:1}--\eqref{eq:SQG:2}), the 2D incompressible porous medium equation (cf.~\eqref{eq:IPM:1}--\eqref{eq:IPM:2}), the 2D and the 3D incompressible Euler equations (cf.~\eqref{eq:2D:Euler} and \eqref{eq:3D:Euler}), and the 2D Boussinesq equations (cf.~\eqref{eq:Boussinesq:1}--\eqref{eq:Boussinesq:2}). These are by no means an exhaustive list of equations for which our method applies. They have been chosen because, with the sole exception of the 2D Euler equations, all the above models are examples of equations where the question of global existence of smooth solutions remains open. Nevertheless, they all have real analytic particle paths. 
The main result of this manuscript is:
\begin{theorem}[\bf Lagrangian analyticity in hydrodynamic equations]
\label{thm:main}
Consider any of the above hydrodynamic systems on a time interval when the Eulerian velocities are $C^{1,\gamma}$, for some $\gamma \in (0,1)$. Then, as  the chord-arc parameter in \eqref{lambda} remains finite on the time interval, the Lagrangian particle trajectories are real analytic functions of time.
\end{theorem}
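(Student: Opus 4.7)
The plan is to establish factorial bounds of the form $|\fr{d^n \XX}{dt^n}(\aa,t)| \le C R^n \, n!$ on the time derivatives of every particle trajectory, which by Taylor's theorem is equivalent to the real analyticity of $t \mapsto \XX(\aa,t)$. The key preparatory step is to recast \eqref{xeq} as a closed integro-differential equation in Lagrangian coordinates. In each model, the active scalar $f$ (vorticity for the Euler systems, $\theta$ for SQG, IPM, Boussinesq) admits a closed-form Lagrangian expression: pure pullback $f_0 \circ \AA$ in the transport cases (2D Euler, SQG, IPM); Cauchy's vorticity formula $f(\xx,t) = (\na_{\aa}\XX)(\AA(\xx,t),t)\om_0(\AA(\xx,t))$ for 3D Euler; a Duhamel correction for Boussinesq. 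Substituting into the Biot-Savart type representation $\uu(\xx,t) = \int K(\xx-\yy) f(\yy,t)\, d\yy$ and changing variables $\yy = \XX(\bb,t)$, which is volume-preserving since $\ddiv\uu = 0$, one arrives at
\[
\fr{d\XX}{dt}(\aa,t) = \int K\bigl(\XX(\aa,t) - \XX(\bb,t)\bigr)\, G(\bb,t)\, d\bb ,
\]
with $G$ explicit and, in the transport cases, independent of $t$. The crucial feature of this rewriting is that the entire nontrivial time dependence is isolated inside the chord $\XX(\aa,t)-\XX(\bb,t)$.

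Differentiating this identity $n-1$ further times in $t$ and applying Fa\`a di Bruno to the composition $K\circ(\XX(\aa,\cdot)-\XX(\bb,\cdot))$ gives
\[
\fr{d^n \XX}{dt^n}(\aa,t) = \sum_{\pi} \int (\na^{|\pi|} K)\bigl(\XX(\aa,t)-\XX(\bb,t)\bigr) \prod_{B\in \pi} \Bigl[\fr{d^{|B|}\XX}{dt^{|B|}}(\aa,t) - \fr{d^{|B|}\XX}{dt^{|B|}}(\bb,t)\Bigr] G(\bb,t)\, d\bb ,
\]
summed over set partitions $\pi$ of $\{1,\ldots,n-1\}$ (and with additional terms distributing derivatives onto $G$ in the non-transport cases). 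The decisive structural point is that every bracketed factor is a \emph{difference of derivatives of the flow}, and hence vanishes as $\bb\to\aa$, softening the singularity of $\na^{|\pi|} K$. The chord-arc bound \eqref{ac} transfers kernel estimates from Eulerian to label coordinates at the cost of factors of $\lambda^{|\pi|+\mathrm{const}}$; combined with inductive Lipschitz seminorm control $L_m := \sup_{\aa\ne\bb}|\fr{d^m\XX}{dt^m}(\aa)-\fr{d^m\XX}{dt^m}(\bb)|/|\aa-\bb| < \infty$ for $m < n$, every term in the sum is a locally integrable singular integral whose value can be estimated against $\prod_{B\in\pi} L_{|B|}\,\|G\|$, producing a recursion of the schematic form
\[
L_n \;\le\; C \sum_{\pi} \lambda^{|\pi|+d-1} \prod_{B\in \pi} L_{|B|} .
\]

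Closing this recursion is then a standard majorant argument: passing to the exponential generating function $\Phi(z) = \sum_{n}L_n z^n/n!$, the recursion becomes a differential inequality $\Phi' \le \Psi(\Phi)$ with $\Psi$ analytic at the origin, and Cauchy-Kowalewski type majorants yield $L_n \le M_{\ast} R^n n!$ with constants depending only on $\lambda$, $\gamma$, $\|\uu\|_{C^{1,\gamma}}$ and $\|G\|_{L^{\infty}\cap L^{1}}$. The main obstacle is the bookkeeping in the inductive step: one must propagate \emph{two} quantities in parallel, a sup norm on $\fr{d^n\XX}{dt^n}$ (needed to execute the outer $\bb$-integration at the next order) and a spatial Lipschitz or $C^{\gamma}$ seminorm (needed to supply the difference cancellation at the next order). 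A further subtlety arises at the leading $|\pi|=1$ term, where a single Lipschitz (or $C^{\gamma}$) difference is pitted against a kernel of Calder\'on--Zygmund order, producing only a principal-value integral — its boundedness on $C^{\gamma}$ is the classical endpoint estimate, and this is precisely where the $C^{1,\gamma}$ hypothesis on $\uu$ is used essentially. For 3D Euler and the Boussinesq system, the non-transport evolution makes $G$ itself $t$-dependent, so Fa\`a di Bruno distributes further derivatives onto $G$; this enlarges the combinatorial sum by an additional Bell-number factor but does not affect the final factorial scaling.
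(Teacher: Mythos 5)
Your overall architecture coincides with the paper's: a self-contained Lagrangian reformulation, the multivariate Fa\`a di Bruno expansion of $\partial_t^n \KK(\XX(\aa,t)-\XX(\bb,t))$, the chord-arc condition to transfer Eulerian kernel bounds to label coordinates, a joint induction on $\|\partial_t^n\XX\|_{L^\infty}$ and on Lipschitz/H\"older seminorms of $\partial_t^n\nabla\XX$, and a square-root-type majorant to close the combinatorial recursion (your generating-function step is exactly the paper's ``magic identity,'' Lemma~\ref{lem:multi:magic}, whose proof is the Fa\`a di Bruno expansion of $(1-2t)^{-1/2}$).

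The genuine gap is in your singularity accounting. You assert that because each block factor $\partial_t^{|B|}\XX(\aa,t)-\partial_t^{|B|}\XX(\bb,t)$ vanishes on the diagonal, ``every term in the sum is a locally integrable singular integral,'' with only the $|\pi|=1$ term requiring an endpoint Calder\'on--Zygmund estimate. This is not so. For an order-zero kernel, $\KK(\yy)\sim|\yy|^{-d}$ --- which is the case for the SQG velocity kernel and for the $\nabla\uu$ kernels of every model considered --- one has $|\partial^{\alp}\KK(\yy)|\lesssim C_K^{|\alp|}|\alp|!\,|\yy|^{-d-|\alp|}$, while the product over the blocks contributes exactly $|\aa-\bb|^{|\alp|}$ by the inductive Lipschitz bounds (the total number of difference factors equals the order of the derivative on $\KK$). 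The net singularity is therefore $|\aa-\bb|^{-d}$ for \emph{every} partition: every term is of Calder\'on--Zygmund order and none converges absolutely. The cancellation that rescues the argument is not the difference structure of the flow but the vanishing mean of the kernel on spheres, which allows one to subtract the value of the transported density at $\aa$ and gain the factor $|\aa-\bb|^{\gamma}$ from $\theta_0\in C^{1,\gamma}$; this must be applied to every term, with the constants $C_K^{|\alp|}|\alp|!$ tracked and absorbed into the combinatorial identity. One also needs a real-analytic cutoff $\KK=\KK_{in}+\KK_{out}$ and integrability of the data ($\theta_0\in W^{1,1}$) to handle the region at infinity, neither of which your sketch supplies.

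A second gap sits inside the H\"older-seminorm half of the induction, which you correctly identify as necessary but do not carry out. After the near/far splitting, the far-field term in which the density difference between $\aa$ and $\bb$ is pulled out of the integral leaves $\int_{|\cc-(\aa+\bb)/2|\geq 4|\aa-\bb|}\partial_t^m\KK_{in}(\XX(\aa,t)-\XX(\cc,t))\,d\cc$; this does not vanish in Lagrangian variables even though $\KK_{in}$ has zero mean in Eulerian variables, and it is not controlled by kernel size alone. The paper handles it by writing each component of $\KK$ as a perpendicular gradient of a non-singular scalar kernel plus a milder kernel, and integrating by parts in $\cc$ using incompressibility to reduce the integral to a boundary term. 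Your appeal to ``the classical endpoint estimate'' as a black box does not cover this, because the operator is composed with the flow map and the constant must be made uniform in $n$.
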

We note that the assumption of the theorem holds for short time if the initial data are such that the Eulerian velocities are $C^{1,\gamma}$. The analyticity is a local property. It follows {from} the proof of the theorem that the radius of time analyticity of $\XX(\cdot,t)$ is a function of a suitable norm of the initial data and time, which enters only through the chord-arc parameter $\lambda$. This parameter dependence is consistent with that for the spatial analyticity radius in the case of real analytic initial datum~\cite{KukavicaVicol09,KukavicaVicol11b}.

The main idea of the proof starts with a representation of the velocity in Lagrangian variables in terms of conserved quantities. It is easiest to show this in the case of 2D active scalars. Two dimensional incompressible hydrodynamic velocities can be expressed in terms of a stream function $\psi$,
\be
\uu = \na^{\perp}\psi
\la{upsi}
\eeq
where $\na^\perp = (-\pa_2, \pa_1)$ is the gradient rotated counter-clockwise by 90 degrees. The active scalars solve transport equations
\be
\pa_t \theta + \uu\cdot\na \theta = 0
\la{aseq}
\eeq
with $\uu$ given by \eqref{upsi} and $\psi$ related to $\theta$ by some time independent linear constitutive law $\psi = L\theta$. In most cases this leads to a simple integral formula
\[
\uu(\xx,t) = p.v. \int_{\RR^2} \KK(\xx-\yy)\theta (\yy,t)d\yy
\]
with a kernel $\KK$ that is singular at the origin,
real analytic away from the origin, and integrates to zero on spheres. Note that \eqref{aseq} simply says that
\be
\theta(\XX(\aa,t),t) = \theta_0(\aa).
\la{thetax}
\eeq
Composing the representation of the velocity with the Lagrangian map
we obtain
\be
\fr{d\XX(\aa,t)}{dt} = {\tilde{p.v.}} \int_{\RR^2}\KK(\XX(\aa,t)-\XX(\bb,t))\theta_0(\bb)d\bb
\la{lagex}
\eeq
{where the symbol $\tilde{p.v.}$ denotes a principal value in the Eulerian variables.} Throughout the manuscript, for notational convenience we drop the p.v. in front of the integrals, {as they are always understood as principal values in the Eulerian sense.}
In Section~\ref{sec:examples} we give the precise versions of \eqref{lagex} for the hydrodynamic models under consideration.

The straightforward general idea is to use the chord-arc condition and analyticity of the kernel to prove inductively Cauchy inequalities for all high time derivatives of $\XX$ at fixed label. The implementation of this idea encounters two sets of difficulties: one due to combinatorial complexity, and the other due to the singularity of the kernels and unboundedness of space. 

Combinatorial complexity is already present in a real variables proof of real analyticity of compositions of multivariate real analytic functions. We discuss this issue separately in~Section~\ref{sec:composition}. We use a multivariate Fa\`{a} di Bruno formula (cf.~\cite{ConstantineSavits96} or Lemma~\ref{lem:multi:Faa} below), multivariate identities
(we call them ``magic identities'', because they seem so to us; cf.~Lemma~\ref{lem:multi:magic}) and an induction with modified versions of Cauchy inequalities (cf.~\eqref{eq:X:n} or~\eqref{eq:SQG:TODO}, inspired by~\cite{KrantzParks02}) in order to control the growth of the combinatorial terms. This difficulty is universal, and because we addressed it head-on, the method is applicable to even more examples, not only the ones described in this work, and not only to hydrodynamic ones. 

The singular integral difficulties are familiar. In all these systems the gradient of velocity is also represented using  singular integrals of Calder\'on-Zygmund type. The singular nature of the kernels is always compensated by the presence of polynomial terms in $\XX(\aa,t)-\XX(\bb,t)$, which arise since the kernels have vanishing means on spheres centered at the origin. The fact that we integrate in the whole space necessitates the introduction of a real analytic cutoff, which for simplicity we take to be Gaussian.

The Euler equations have classical invariants~\cite{Constantin01a,Constantin04,FrischZheligovsky13}, which yield completely local relations involving $d\XX/dt$ in Lagrangian coordinates. This is remarkable, but special: in more general systems the corresponding relations are not local. Because of this, we pursue the same proof for the Euler equations as for the general case.

We give the fully detailed proof of Theorem~\ref{thm:main} in the case of the 2D SQG equations. This is done in Section~\ref{sec:SQG}. The proofs for the 2D IPM and 2D and 3D Euler equations are the same. The 2D IPM and 3D Euler equations have of course different kernels;  2D Euler has a less singular kernel. The proof in the case of the 2D Boussinesq equations has an additional level of difficulty since the operator $L$ in the constitutive law for $\theta$ is time-dependent. This issue will be addressed in a forthcoming work.

The paper is organized as follows. In Section~\ref{sec:examples} we provide the self-contained Lagrangian formulae of type \eqref{lagex} for each of the hydrodynamic models under consideration. In Section~\ref{sec:composition} we introduce the combinatorial machinery used in the proof of the main theorem, which is centered around the multivariate Fa\`a di Bruno formula. In Section~\ref{sec:SQG} we give the proof of Theorem~\ref{thm:main} in the case of SQG. Lastly, in Appendix~\ref{sec:Lagrangian:derivation}, for the sake of completeness, we give the derivation of the natural Lagrangian formulae stated in Section~\ref{sec:examples}. In Appendix~\ref{sec:1D} we recall from~\cite{KrantzParks02} the one-dimensional Fa\`a di Bruno formula and its application to the composition of real analytic functions.

\section{Self-contained Lagrangian evolution} 
\label{sec:examples}

In this section we give self-contained formulae for the time derivatives of $\XX$ and $\nabla \XX$, for each of the hydrodynamic equations considered. In each case the initial datum enters these equations as a parameter. 
We use the usual Poisson bracket notation
\[
\{f,g\} = (\pa_1 f)(\pa_2 g) -({\pa_2 f})({\pa_1 g}) = (\na^{\perp}f)\cdot (\na g). 
\]

\subsection{2D Surface Quasi-Geostrophic Equation}
The inviscid SQG  equation is
\begin{align}
&\partial_t\theta + (\uu \cdot \nabla )\theta = 0, \label{eq:SQG:1}\\
&\uu = \nabla^\perp (-\Delta)^{-1/2} \theta= {\mathcal R}^\perp \theta \label{eq:SQG:2}
\end{align}
where ${\mathcal R} = (R_1,R_2)$ is the vector of Riesz-transforms. Here $\xx  \in \RR^2$ and $t>0$.  We recall cf.~\cite{ConstantinMajdaTabak94} that the SQG equation is locally well-posed if $\theta_0\in C^{1,\gamma}$, with $\gamma\in (0,1)$. It follows from \eqref{eq:SQG:1}--\eqref{eq:SQG:2} that the vector fields $\nabla^\perp \theta \cdot \nabla$ and $\partial_t + \uu \cdot \nabla$ commute. The ensuing self-contained formula for the  Lagrangian trajectory $\XX$ induced by the velocity field $\uu$ is
\begin{align}
\frac{d\XX}{dt}(\aa,t) =  \int \KK(\XX(\aa,t)-\XX(\bb,t)) \theta_0(\bb) \, d\bb,
\label{eq:SQG:grad:X:def}
\end{align}
while the gradient of the Lagrangian, $\nabla_a \XX$, obeys 
\begin{align}
&\frac{d(\nabla_a \XX)}{dt} (\aa,t) = \nabla_a\XX(\aa,t)   \int \KK(\XX(\aa,t)-\XX(\bb,t))
\left(\nabla_b^{\perp}{\XX}^{\perp}(\bb,t)\right )\cdot\, \nabla_b \theta_0(\bb)\,d\bb.
\label{eq:SQG:nabla:X:def}
\end{align}
Here the kernel $\KK$ associated to the rotated Riesz transform ${\mathcal R}^\perp$ is given by
\begin{align*}
\KK(\yy) =\frac{\yy^\perp}{2\pi |\yy|^{3}}.
\end{align*}
We refer to Appendix~\ref{sec:app:SQG} for details.

\subsection{The 2D Incompressible Porous Media Equation}
The inviscid IPM equation assumes the form
\begin{align}
&\partial_t \theta  + (\uu\cdot\nabla) \theta =0, \label{eq:IPM:1} \\
&\uu = {\mathbb P} (0,\theta) = -\nabla p - (0,\theta). \label{eq:IPM:2}
\end{align}
We recall, cf.~\cite{CordobaGancedoOrive07} that the IPM equation is locally well-posed if $\theta_0\in C^{1,\gamma}$, with $\gamma\in (0,1)$.
For the particle trajectories $\XX$ induced by the vector field $\uu$ we have
\begin{align*}
\frac{d \XX}{dt}(\aa,t) 
= -\frac1{2\pi} \int \frac{(\XX(\aa,t)-\XX(\bb,t))^\perp}{|(\XX(\aa,t)-\XX(\bb,t)|^2}\,\left\{\theta_0(\bb),X_2(\bb,t) \right\}\,d\bb
\end{align*}
and
\begin{align*}
\frac{d(\nabla_a \XX)}{dt} (\aa,t) 
&=  \nabla_a \XX(\aa,t)  \int \KK(\XX(\aa,t)-\XX(\bb,t))\,   \left\{\theta_0(\bb), X_2(\bb,t)\right\}  \,db  \notag \\
&\qquad + \frac12\left\{\theta_0(\aa), X_2(\aa,t)\right\}\,\left[\begin{array}{cc} 0  & -1\\ 1 & 0\end{array}  \right]\nabla_a \XX(\aa,t)
\end{align*}
where $\KK$ is given by
\begin{align}
\KK(\yy)= \KK(y_1, y_2) = \frac{1}{2\pi |\yy|^4} \left[\begin{array}{cc} 2y_1 \,y_2 & y_2^2-y_1^2\\ y_2^2-y_1^2 & -2y_1 \,y_2\end{array}  \right].
\label{kernelK}
\end{align}
The details are given in Appendix~\ref{sec:app:IPM}.

\subsection{The 3D Euler Equations}
The three-dimensional Euler equations in vorticity form are given by 
\begin{align}
\partial_t \om + \uu \cdot \nabla \om = \om \cdot \nabla \uu 
\label{eq:3D:Euler}
\end{align}
where the divergence free $\uu$ can be recovered from $\om$ via the Biot-Savart formula~\cite{MajdaBertozzi02}
\[
\uu(\xx,t) = \fr{1}{4\pi}\int_{\RR^3}\fr{\xx-\yy}{|\xx-\yy|^3}\times \om(\yy,t)d\yy.
\]
The geometric interpretation of \eqref{eq:3D:Euler} and incompressibility is that the vector fields $\om \cdot \nabla$ and $\partial_t + \uu \cdot \nabla$ commute. The local existence and uniqueness of solutions to \eqref{eq:3D:Euler} with initial data $\uu_0 \in C^{1,\gamma}$, for $\gamma \in (0,1)$, goes back at least to~\cite{Lichtenstein25} (see also~\cite{MajdaBertozzi02} and references therein for a more modern perspective). Due to the Cauchy formula 
\[
\om (\XX(\aa,t), t) = \na\XX(\aa,t)\om_0(\aa),
\]
the Lagrangian map $\XX$ obeys the self-contained evolutions
\[
\frac{d\XX}{dt}(\aa,t) = \frac{1}{4\pi} \int \frac{\XX(\aa,t)-\XX(\bb,t)}{|\XX(\aa,t)-\XX(\bb,t)|^3} \times ( \nabla_b \XX(\bb,t) \om_0(\bb))d\bb
\]
and
\begin{align*}
\frac{d(\nabla_a \XX)}{dt}(\aa,t) &= (\nabla_a \XX)(\aa,t)  \int  \KK(\XX(\aa,t)-\XX(\bb,t)) \left( \nabla_b \XX(\bb,t) \om_0(\bb) \right)  d\yy  \notag\\ 
&\qquad + \frac 12 (\nabla_a\XX(\aa,t)\om_0(\aa)) \times (\nabla_{a} \XX)(\aa,t) 
\end{align*}
where for vectors $\xx$ and $\yy$ the matrix kernel $\KK(\xx)\yy$ is defined in coordinates by
\[
( \KK(\xx) \yy)_{ij}= \frac{3}{8\pi} \frac{\left( \left( \xx \times \yy \right) \otimes \xx + \xx \otimes \left( \xx \times \yy\right)\right)_{ij}}{|\xx|^5} = \frac{3}{8\pi} \frac{\left( \xx \times \yy \right)_i x_j + \left( \xx \times \yy\right)_j x_i}{|\xx|^5}.
\]
The details are given in Appendix~\ref{sec:app:3DE}.

\subsection{The 2D Euler Equations}
The two-dimensional Euler equations in vorticity form are
\begin{align}
\partial_t \omega + \uu \cdot \nabla \omega = 0 \label{eq:2D:Euler}
\end{align}
where the Biot-Savart law~\cite{MajdaBertozzi02} in two dimensions reads
\[
\uu(\xx) = \frac{1}{2\pi} \int \frac{(\xx-\yy)^\perp}{|\xx-\yy|^2} \omega(\yy) d\yy.
\]
The equations are locally in time well-posed  if the initial velocity $\uu_0 \in C^{1,\gamma}$, for some $\gamma \in (0,1)$ (cf.~\cite{Lichtenstein25}). In two dimensions solutions cannot develop finite time singularities~\cite{Yudovich63}, but this fact will not be used in our proof, since global existence is not known for any of the other hydrodynamic equations considered in this paper. The particle trajectory $\XX$ obeys the evolution
\begin{align*} 
\frac{d\XX}{dt}(\aa,t) = \frac{1}{2\pi} \int \frac{(\XX(\aa,t)-\XX(\bb,t))^\perp}{|\XX(\aa,t)-\XX(\bb,t)|^2} \omega_0(\bb) d\bb,
\end{align*}
while the time derivative of $\nabla_a \XX$ obeys
\begin{align*} 
\frac{d(\nabla_a \XX)}{dt}(\aa,t)
&= \nabla_a \XX(\aa,t)  \int \KK(\XX(\aa,t)-\XX(\bb,t))\, \omega_0(\bb) \,d\bb  + \frac12 \omega_0(\aa) \,\left[\begin{array}{cc} 0  & -1\\ 1 & 0\end{array}  \right]\nabla_a \XX(\aa,t)
\end{align*}
with $\KK$ being the kernel in \eqref{kernelK}. These details are given in Appendix~\ref{sec:app:2DE}.

\subsection{The 2D Boussinesq Equations}
The two-dimensional Boussinesq equations for the velocity field $\uu$, scalar pressure $p$, and scalar density $\theta$ are
\begin{align} 
& \partial_t \uu + (\uu \cdot\nabla) \uu = -\nabla p + \theta {\ee}_2, \qquad  \nabla \cdot \uu =0, \label{eq:Boussinesq:1}\\
& \partial_t \theta + (\uu\cdot\nabla) \theta =0, \label{eq:Boussinesq:2} 
\end{align}
where ${\ee}_2 = (0,1)$, $\xx \in \RR^2$, and $t>0$.
The scalar vorticity $\omega =\nabla^\perp \cdot \uu = \partial_{x_1} u_2 - \partial_{x_2} u_1$ satisfies
\begin{align*}
\partial_t \omega + (\uu\cdot\nabla) \omega = \partial_{x_1} \theta.
\end{align*}
The local well-posedness for the 2D Boussinesq holds for initial data
$\uu_0,\theta_0 \in C^{1,\gamma}$ with $\gamma\in (0,1)$ (cf.~\cite{EShu94,ChaeNam97}).
The particle trajectories $\XX$ induced by $\uu$ then obey
\begin{align*}
\frac{d \XX}{dt}(\aa,t) 
&= \frac1{2\pi} \int \frac{(\XX(\aa,t)-\XX(\bb,t))^\perp}{|(\XX(\aa,t)-\XX(\bb,t)|^2}\, \omega_0(\bb) \, d\bb \notag \\
&\quad + \frac1{2\pi} \int \frac{(\XX(\aa,t)-\XX(\bb,t))^\perp}{|(\XX(\aa,t)-\XX(\bb,t)|^2}
\left( \int_0^t \left\{\theta_0(\bb), X_2(\bb,\tau)\right\} d\tau \right) \, d\bb.
\end{align*}
and 
\begin{align*}
\frac{d (\nabla_a \XX)}{dt} (\aa,t) 
&=  \left(  \int \KK(\XX(\aa,t)-\XX(\bb,t))\, \omega_0(\bb) \, d\bb\right) \nabla_a \XX(\aa,t) \notag \\
&\quad +   \left(  \int \KK(\XX(\aa,t)-\XX(\bb,t)) \int_0^t \left\{\theta_0(\bb), X_2(\bb,\tau)\right\} \,d\tau\,d\bb\right)\nabla_a \XX(\aa,t) \notag \\
&\quad + \frac12\left(\omega_0(\aa) + \int_0^t \left\{\theta_0(\aa), X_2(\aa,\tau)\right\}\,d\tau\right)\,\left[\begin{array}{cc} 0  & -1\\ 1 & 0\end{array}  \right]\nabla_a \XX(\aa,t),
\end{align*}
where the kernel $\KK$ is given by \eqref{kernelK}.
The derivation is given in Appendix~\ref{sec:app:Boussinesq}.

\section{Analyticity and the composition of functions: combinatorial lemmas}
\label{sec:composition}

Let $\XX \colon \RR \to \RR^d$ be a vector valued function which obeys the differential equation
\begin{align}
\frac{d}{dt} \XX(t) = \KK(\XX(t))
\label{eq:ODE}
\end{align}
where $\KK\colon \RR^d \to \RR^d$ is a given real analytic function of several variables. In this section we show that if $\XX$ is bounded, then it is in fact real analytic(see Theorem~\ref{thm:dD} below). This statement should be understood in the neighborhood of a point $t_0 \in \RR$, and $\XX_0 = \XX(t_0) \in \RR^d$.

The proof in the case $d=1$ is taken from ~\cite[Chapter 1.5]{KrantzParks02}, and serves as a guiding example (see Appendix~\ref{sec:1D} below). The case $d\geq2$ requires an extended combinatorial machine, and for that we appeal to the multivariate Fa\`a di Bruno formula in~\cite{ConstantineSavits96}. The precise result is:

\begin{theorem}
\label{thm:dD}
 Let $\KK = (K_1,\ldots,K_d) \colon \RR^d \to \RR^d$ be a function which obeys
 \begin{align}
|\partial^{\alp} K_i (\XX) | \leq C \frac{|\alp|!}{R^{|\alp|}}
\label{eq:K:assumption}
\end{align}
for some $C,R>0$, $i \in \{1,\ldots,d\}$, and for all $\XX$ in the neighborhood of some $\XX_{0} = \XX(t_0)$, where
$\XX = (X_1,\ldots,X_d) \colon \RR \to \RR^d$ is a function which obeys
\begin{align}
|X_i(t)|\leq C
\label{eq:X:n=1}
\end{align}
for all $t$ in the neighborhood of $t_0$, and $i \in \{1,\ldots,d\}$. If $\XX$ is a solution of \eqref{eq:ODE}, then we have that
\begin{align}
|(\partial_t^{n} X_i)(t)| \leq (-1)^{n-1} {1/2 \choose n} \frac{(2C)^{n}}{R^{n-1}} n!
\label{eq:X:n}
\end{align}
for all $n\geq 1$,  all coordinates $i \in \{1,\ldots,d\}$, and all $t$ in a neighborhood of $t_0$. In particular, $\XX$ is a real analytic function of $t$ at $t_0$, with radius of analyticity $R/C$.
\end{theorem}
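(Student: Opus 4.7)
The plan is to prove \eqref{eq:X:n} by strong induction on $n$, iterating the ODE \eqref{eq:ODE} via the multivariate Fa\`a di Bruno formula (Lemma~\ref{lem:multi:Faa}) and then collapsing the resulting combinatorial sum through the magic identity (Lemma~\ref{lem:multi:magic}). The numbers $a_n := (-1)^{n-1}\binom{1/2}{n}$, which are the Taylor coefficients of the majorant $1-\sqrt{1-x}$, are tailor-made for this bootstrap: the quadratic identity $f^2-2f+x=0$ satisfied by $f(x)=1-\sqrt{1-x}$ forces convolution relations among the $a_n$ that are compatible with Fa\`a di Bruno-style recursions, which is ultimately what makes the sequence a fixed point under composition.

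The base case $n=1$ is immediate: by \eqref{eq:ODE}, $\partial_t X_i = K_i(\XX)$, and the $|\alp|=0$ case of \eqref{eq:K:assumption} gives $|\partial_t X_i|\leq C$, which is exactly the right-hand side of \eqref{eq:X:n} at $n=1$ since $a_1 \cdot 2C \cdot 1! = C$.

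For the inductive step, assuming \eqref{eq:X:n} for orders $1,\ldots,n-1$, I would differentiate the ODE to write $\partial_t^n X_i = \partial_t^{n-1}(K_i\circ \XX)$ and expand via the $d$-dimensional Fa\`a di Bruno formula. This produces a sum indexed by set partitions $\pi$ of $\{1,\ldots,n-1\}$, equipped with coordinate assignments $j_\ell\in\{1,\ldots,d\}$ to the blocks; each term is the product of a multi-index derivative $\partial^{\alp}K_i(\XX(t))$, where $|\alp|$ equals the number of blocks of $\pi$, with the corresponding lower-order factors $\partial_t^{|B_\ell|}X_{j_\ell}$. Inserting the Cauchy-type bound \eqref{eq:K:assumption} on $\partial^{\alp}K_i$ and the inductive hypothesis on each lower-order factor, the powers of $R$ collect to $R^{-(n-1)}$ and the factors of $2C$ combine to $(2C)^n$, leaving a purely combinatorial expression of the form $|\alp|!\prod_\ell a_{|B_\ell|}$, weighted by the Fa\`a di Bruno multinomials and summed over partitions together with coordinate assignments.

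The main obstacle is to show that this remaining combinatorial sum equals $a_n\, n!$; this is precisely the statement of the magic identity (Lemma~\ref{lem:multi:magic}), and it is the heart of the proof. Morally the identity reflects that the majorant $\sum a_n x^n = 1-\sqrt{1-x}$ is a fixed point of a suitable composition operation, so that series bounded by $a_n$ compose into series bounded by $a_n$; the work in the lemma is to make this precise in the multi-index and set-partition language required by the multivariate Fa\`a di Bruno expansion, including the coordinate sums that absorb the apparent factors of $d$ and align them against the $|\alp|!/R^{|\alp|}$ growth allowed by \eqref{eq:K:assumption}. Once \eqref{eq:X:n} is established, real analyticity follows because $a_n\sim c\, n^{-3/2}$, so that the Taylor coefficients $\partial_t^n X_i(t_0)/n!$ are bounded geometrically in $(2C/R)^n$ up to lower-order factors, yielding the positive radius of time-analyticity stated in the theorem.
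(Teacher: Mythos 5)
Your proposal matches the paper's argument step for step: the multivariate Fa\`a di Bruno formula (Lemma~\ref{lem:multi:Faa}) to expand $\partial_t^{n}(K_i\circ\XX)$, insertion of the Cauchy-type bound \eqref{eq:K:assumption} together with the inductive hypothesis, and collapse of the resulting combinatorial sum via the magic identity (Lemma~\ref{lem:multi:magic}), with the passage to analyticity from the asymptotics of ${1/2 \choose n}$ identical as well. The only cosmetic difference is your explicit derivation of the $n=1$ base case from the $|\alp|=0$ instance of \eqref{eq:K:assumption} and the ODE, which is in fact a more precise justification than the paper's terse citation of \eqref{eq:X:n=1}.
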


\subsection{Preliminaries}
We denote by $\NN_0$ the set of all integers strictly larger than $-1$, and by $\NN_0^d$ the set of all multi-indices $\alp = (\alpha_1,\cdots,\alpha_d)$ with $\alpha_j \in \NN_0$. For a multi-index $\alp$, we write
\begin{align*}
|\alp| &= \alpha_1 + \ldots + \alpha_d \\
\alp! &=  (\alpha_1!) \cdot \ldots \cdot (\alpha_d !)\\
\partial^{\alp} &= \partial_{x_1}^{\alpha_1} \ldots \partial_{x_d}^{\alpha_d} \\
\yy^{\alp} &= (y_1^{\alpha_1}) \cdot \ldots \cdot (y_d^{\alpha_d})
\end{align*}
where $\yy \in \RR^d$ is a point. The following definition shall be needed below.
\begin{definition}
Let $n\geq 1$, $1\leq s \leq n$, and $\alp \in \NN_0^d$ with $1 \leq |\alp| \leq n$, define the set
\begin{align}
P_s(n,\alp)
&= \Big\{ (\kk_1,\ldots,\kk_s; \ll_1,\ldots,\ll_s) \in \NN_0^d \times\ldots\NN_0^d \times \NN \times \ldots \NN \colon \notag \\
&\qquad 0< |\kk_i| , 0 < \ll_1 < \ldots < \ll_s, \sum_{i=1}^s \kk_i = \alp, \sum_{i=1}^s |\kk_i| \ll_i = n \Big \}.
\label{eq:Ps:def}
\end{align}
In particular, we note that $\ll_i \neq 0$.
\end{definition}
Moreover, for an integer $j\geq 1$ we define
\begin{align*}
{1/2 \choose j} = \frac{(1/2) (1/2-1) \ldots (1/2-j+1)}{j!}
\end{align*}
and 
\[{ 1/2 \choose 0} = -1.\]
We use the above non-standard convention for $\comb{0}$ so that we can ensure
\begin{align*}
(-1)^{j-1} {1/2 \choose j} \geq 0
\end{align*}
for all $j\geq 0$.
Moreover, we will use that
\begin{align}
j! (-1)^{j-1} {1/2 \choose j} =  \frac{1}{2^j} \prod_{k=0}^{j-2} (2k+1) = \frac{(2j-3)!!}{2^j} = \frac{(2j-3)!}{2^{2j-2} (j-2)!} \leq C \frac{j!}{2^j}
\label{eq:factorial:bound}
\end{align}
for some universal constant $C$,
whenever $j\geq 2$.

With this notation in hand, we recall \cite[Theorem 2.1]{ConstantineSavits96}.

\begin{lemma}[\bf Multivariate Fa\`a di Bruno Formula]
\label{lem:multi:Faa}
Let $h\colon \RR^d \to \RR$ be a scalar function, $C^\infty$ in the neighborhood of ${\yy}_{0} = \gg(x_0)$, and $\gg\colon \RR \to \RR^d$ be a vector function, $C^\infty$ in the neighborhood of $x_0$. Define $f(x) = h(\gg(x)) \colon \RR \to \RR$. Then
\begin{align*}
f^{(n)}(x_0) = n! \sum_{1\leq |\alp|\leq n} (\partial^{\alp} h)(\gg(x_0)) \sum_{s=1}^n \sum_{P_s(n,\alp)} \prod_{j=1}^s \frac{\left( (\partial^{\ll_j} \gg)(x_0) \right)^{\kk_j}}{(\kk_j !) (\ll_j !)^{|\kk_j|}}
\end{align*}
holds for any $n\geq 1$, with the convention that $0^0 :=1$.
\end{lemma}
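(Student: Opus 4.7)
The plan is to extract the $n$-th Taylor coefficient of $h(\gg(x))$ about $x_0$ by separately Taylor-expanding $h$ at $\yy_0=\gg(x_0)$ and $\gg$ at $x_0$, then matching coefficients. Since $f^{(n)}(x_0)$ depends only on finitely many derivatives of $h$ and of $\gg$, one may assume without loss of generality that both are polynomials; the $C^\infty$ case then follows from Taylor's theorem with remainder, as the remainder in the order-$n$ Taylor polynomial of $h$ at $\yy_0$ is $O(|\yy-\yy_0|^{n+1})$, which composed with $\gg$ produces a $C^\infty$ function of $x$ that is $O((x-x_0)^{n+1})$ and hence contributes nothing to $f^{(n)}(x_0)$.

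First I would Taylor-expand $h$ at $\yy_0$ to obtain
\begin{align*}
h(\gg(x)) = \sum_{0\le |\alp| \le n} \frac{(\partial^{\alp} h)(\yy_0)}{\alp!}\bigl(\gg(x)-\gg(x_0)\bigr)^{\alp} + \textnormal{(irrelevant remainder)}.
\end{align*}
The multi-index $\alp=\boldsymbol{0}$ contributes only the constant $h(\yy_0)$, killed by $\partial_t^n$ for $n\ge 1$, and since $(\gg(x)-\gg(x_0))^{\alp}$ vanishes to order $|\alp|$ at $x_0$, only $1\le |\alp|\le n$ survives. Next I would expand each component $g_i(x)-g_i(x_0)=\sum_{\ell\ge 1}\frac{(\partial^{\ell} g_i)(x_0)}{\ell!}(x-x_0)^{\ell}$ and apply the multinomial theorem to
\begin{align*}
\bigl(g_i(x)-g_i(x_0)\bigr)^{\alpha_i} = \sum_{\substack{\mu_{i,\ell}\ge 0\\ \sum_{\ell}\mu_{i,\ell}=\alpha_i}} \frac{\alpha_i!}{\prod_{\ell} \mu_{i,\ell}!}\prod_{\ell\ge 1}\left(\frac{(\partial^{\ell} g_i)(x_0)}{\ell!}\right)^{\mu_{i,\ell}}(x-x_0)^{\sum_{\ell} \ell\,\mu_{i,\ell}}.
\end{align*}
Taking the product over $i\in\{1,\ldots,d\}$ and then extracting the coefficient of $(x-x_0)^{n}/n!$ imposes the constraint $\sum_{i,\ell}\ell\,\mu_{i,\ell}=n$.

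Finally, I would reindex by grouping $\kk_{\ell}:=(\mu_{1,\ell},\ldots,\mu_{d,\ell})\in\NN_0^d$, so that the constraints become $\sum_{\ell}\kk_{\ell}=\alp$ and $\sum_{\ell}\ell\,|\kk_{\ell}|=n$, and the weights collapse according to $\prod_i\prod_{\ell}\mu_{i,\ell}!=\prod_{\ell}\kk_{\ell}!$, $\prod_i (\partial^{\ell} g_i(x_0))^{\mu_{i,\ell}}=(\partial^{\ell}\gg(x_0))^{\kk_{\ell}}$, and $\prod_i (\ell!)^{\mu_{i,\ell}}=(\ell!)^{|\kk_{\ell}|}$. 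Only finitely many $\kk_{\ell}$ are nonzero, so listing the indices with $\kk_{\ell}\ne \boldsymbol{0}$ as $\ell_1<\cdots<\ell_s$ and invoking the convention $0^0=1$ to discard the vanishing factors, one recognizes the resulting index set precisely as $\bigcup_{s=1}^{n} P_s(n,\alp)$. The factor $\alp!=\prod_i \alpha_i!$ arising from the multinomial expansion cancels the $(\alp!)^{-1}$ from the Taylor expansion of $h$, yielding the stated formula after multiplication by $n!$. I expect the main obstacle to be the combinatorial bookkeeping in this last step---tracking that every factorial weight lines up correctly under the regrouping, and that the strict inequalities $\ell_1<\cdots<\ell_s$ in the definition of $P_s(n,\alp)$ are enforced by retaining only indices with $\kk_{\ell}\ne\boldsymbol{0}$---rather than any analytic subtlety, which reduces entirely to Taylor's theorem with remainder.
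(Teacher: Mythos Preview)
Your argument is correct: matching Taylor coefficients after expanding $h$ at $\yy_0$ and each $g_i$ at $x_0$, then regrouping the multinomial indices as $\kk_\ell=(\mu_{1,\ell},\ldots,\mu_{d,\ell})$ and listing only those $\ell$ with $\kk_\ell\ne\boldsymbol 0$ in increasing order, is exactly what produces the index set $P_s(n,\alp)$ and the stated factorial weights. The reduction to polynomials is also sound, since $(h-P_n h)(\gg(x))$ is a $C^\infty$ function that is $O((x-x_0)^{n+1})$, which forces its $n$-th derivative at $x_0$ to vanish.

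Note, however, that the paper does not give a proof of this lemma at all: it is simply quoted from \cite[Theorem~2.1]{ConstantineSavits96} and used as a black box. So there is no argument in the paper to compare yours against. What you have supplied is a self-contained derivation that makes the manuscript independent of that reference for this particular formula; the cost is the bookkeeping you flag, which is indeed routine once the regrouping map $(\mu_{i,\ell})\mapsto(\kk_\ell)$ is written down.
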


\subsection{Main combinatorial identity}

The following lemma will be essential in the proof of Theorem~\ref{thm:dD}.

\begin{lemma}[\bf Multivaried Magic Identity]
\label{lem:multi:magic}
For $n\geq 1$, with the earlier notation we have that
\begin{align*}
\sum_{1\leq |\alp|\leq n}  (-1)^{|\alp|} |\alp|! \sum_{s=1}^n \sum_{P_s(n,\alp)} \prod_{j=1}^s \frac{{1/2 \choose \ll_j}^{|\kk_j|}}{(\kk_j !)}  = 2 (n+1) {1/2 \choose n+1} .
\end{align*}
\end{lemma}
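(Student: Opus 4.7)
My approach is to apply the multivariate Fa\`a di Bruno formula (Lemma~\ref{lem:multi:Faa}) \emph{in reverse}: I will exhibit specific choices of $\gg\colon \RR\to\RR^d$ and $h\colon\RR^d\to\RR$ such that the composition $f(x) := h(\gg(x))$ has the left-hand side of the identity equal to $f^{(n)}(0)/n!$, and then evaluate this Taylor coefficient directly.

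The presence of the binomial factor ${1/2\choose \ll_j}$ on the left dictates the choice $g_i(x) := (1+x)^{1/2}$ for each $i=1,\ldots,d$, so that $g_i^{(\ll)}(0) = \ll!\,{1/2\choose \ll}$. With $\gg(x) = (g_1(x),\ldots,g_d(x))$, the internal factors appearing in Fa\`a di Bruno become
\begin{align*}
\frac{\big((\partial^{\ll_j}\gg)(0)\big)^{\kk_j}}{\kk_j!\,(\ll_j!)^{|\kk_j|}} = \frac{{1/2\choose \ll_j}^{|\kk_j|}}{\kk_j!},
\end{align*}
matching exactly the product in the statement. To reproduce the external weight $(-1)^{|\alp|}|\alp|!$, I take $h(\yy) := 1/(1 + (y_1+\cdots+y_d - d))$, whose mixed partials at the base point $\yy_0 := \gg(0) = (1,\ldots,1)$ collapse, via the multinomial theorem applied to $\sum_i (y_i-1)$, to $(\partial^{\alp}h)(\yy_0) = (-1)^{|\alp|}|\alp|!$ for every $|\alp|\ge 1$. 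Substituting both choices into the multivariate Fa\`a di Bruno formula identifies the left-hand side of the magic identity with $f^{(n)}(0)/n!$.

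The remaining task is to compute this Taylor coefficient from the explicit form of $f$. The plan is to reduce the multi-index sums to a one-variable generating function via the multinomial identity $\sum_{|\kk|=j}1/\kk! = d^j/j!$ and the formal series $F_0(z) := (1+z)^{1/2}-1$, then exploit the geometric-series relation
\begin{align*}
\sum_{m\ge 1}(-F_0(z))^m = \frac{-F_0(z)}{1+F_0(z)} = (1+z)^{-1/2}-1,
\end{align*}
to extract the $n$-th coefficient as $[z^n](1+z)^{-1/2}$. This equals the right-hand side by the elementary binomial identity ${-1/2 \choose n} = 2(n+1){1/2 \choose n+1}$.

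The most delicate step is the combinatorial reduction from the multi-index sum on the left to the single-variable generating function. One must carefully unravel the constraints $\sum_i \kk_i = \alp$, $\sum_i |\kk_i|\ll_i = n$, and $\ll_1<\cdots<\ll_s$, apply the multinomial identity to evaluate the inner sum over $\alp\in\NN_0^d$ with fixed $|\alp|$, and track signs through the passage from $F_0(z)^m$ to $(-F_0(z))^m$. Once this bookkeeping is performed, the identification with $(1+z)^{-1/2}$ is immediate and the proof is complete.
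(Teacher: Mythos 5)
Your strategy is the same one the paper uses: run the multivariate Fa\`a di Bruno formula backwards on a diagonal composite $f=h\circ\gg$ whose inner derivatives generate $\ll!\comb{\ll}$ and whose outer derivatives generate $\pm|\alp|!$, then read off $f^{(n)}(0)$ from a closed form. Your choices are clean, and the computation $(\partial^{\alp}h)(\gg(0))=(-1)^{|\alp|}|\alp|!$ for $h(\yy)=1/(1+y_1+\cdots+y_d-d)$ is correct.

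The gap is in the final identification, and it is genuine. With $g_i(x)=(1+x)^{1/2}$ for every $i$, the composite is
\begin{align*}
f(x)=\frac{1}{1+d\big((1+x)^{1/2}-1\big)}=\frac{1}{1+dF_0(x)},\qquad F_0(x):=(1+x)^{1/2}-1,
\end{align*}
which equals $(1+x)^{-1/2}$ only when $d=1$. The geometric series you must sum is $\sum_{m\ge1}(-dF_0)^m$, not $\sum_{m\ge1}(-F_0)^m$: the factor $d^{j}$ produced by your own multinomial identity $\sum_{|\kk|=j}1/\kk!=d^j/j!$ does not cancel, and the reduction actually yields $[x^n]\bigl(-dF_0/(1+dF_0)\bigr)$. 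For $d\ge2$ this is not $2(n+1)\comb{n+1}$, and no bookkeeping can fix this because the stated identity is itself dimension-dependent: at $n=1$, $d=2$ the left-hand side has exactly two terms ($\alp=(1,0)$ and $\alp=(0,1)$, each with $s=1$, $\ll_1=1$, $\kk_1=\alp$), each equal to $(-1)\cdot 1!\cdot\comb{1}=-1/2$, giving $-1$, whereas $2\cdot 2\cdot\comb{2}=-1/2$. Note that the paper's own argument runs into the same obstruction: the proposed $K(Z_1,\dots,Z_d)=\prod_j(1-Z_j)^{-1/d}$ does not satisfy $(\partial^{\alp}K)(0)=|\alp|!$ when $d\ge2$ (already $\partial_{Z_1}K(0)=1/d$), and the formal series that does, namely $K(\yy)=1/(1-y_1-\cdots-y_d)$, restricts on the diagonal to $1/(1-dZ)$ rather than $1/(1-Z)$. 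Your proof is complete and correct for $d=1$, where it reduces to the one-dimensional magic identity of Appendix~\ref{sec:1D}; for $d\ge2$ either the weight $|\alp|!$ must be replaced by $|\alp|!/d^{|\alp|}$ (restoring the equality), or the conclusion must be weakened to an inequality carrying a $d$-dependent geometric factor, with the constants in the applications adjusted accordingly.
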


\begin{proof}[Proof of Lemma~\ref{lem:multi:magic}]
The proof mimics that of the proof of~\cite[Lemma 1.5.2]{KrantzParks02}, by using a diagonal argument.

Let $Z \colon \RR \to \RR$ be defined as
\begin{align*}
Z(t) =  \left( 1 - \sqrt{1- 2t} \right) =  \left(1 - (1+ (-2t) )^{1/2} \right).
\end{align*}
This function has the property that
\begin{align*}
(\partial^{\ll} Z)(0) = - {1/2 \choose \ll} (-2)^{\ll} \ll!
\end{align*}
for any $\ll\geq0$. Also, $Z(0) =0$.

Next, consider a function $K \colon \RR^d \to \RR$, such that
\begin{align*}
(\partial^{\alp} K)(0,\ldots,0) =   |\alp| !
\end{align*}
for any multi-index $\alp \in \NN_0^d$. For example, take a real analytic function of several variables, which on the diagonal is given by
\begin{align*}
K(Z,\ldots,Z) = \frac{1}{1- Z}.
\end{align*}
For example, consider
\begin{align*}
K(Z_1,\ldots,Z_d) = \prod_{j=1}^d \left(\frac{1}{1-Z_j} \right)^{1/d}  
\end{align*}
which is smooth in a neighborhood of the origin in $\RR^d$.

Let $F \colon \RR \to \RR$ be defined as
\begin{align*}
F(t) = K( Z(t),\ldots,Z(t) ) = \frac{1}{\sqrt{1-2t}}.
\end{align*}
This function has the property that
\begin{align}
F^{(n)}(0) = - (n+1)! {1/2 \choose n+1} (-2)^{n+1} \label{eq:F:n:derivatives}
\end{align}
for any $n\geq 1$.

Using Lemma~\ref{lem:multi:Faa} we have on the other hand that
\begin{align*}
F^{(n)}(0)
&= n! \sum_{1\leq |\alp|\leq n} (\partial^{\alp} K)(0,\ldots,0) \sum_{s=1}^n \sum_{P_s(n,\alp)} \prod_{j=1}^s \frac{\left( (\partial^{\ll_j} Z)(0) \right)^{|\kk_j|}}{(\kk_j !) (\ll_j !)^{|\kk_j|}} \notag\\
&= n! \sum_{1\leq |\alp|\leq n} |\alp|! \sum_{s=1}^n \sum_{P_s(n,\alp)} \prod_{j=1}^s \frac{\left( - {1/2 \choose \ll_j} (-2)^{\ll_j} \ll_j! \right)^{|\kk_j|}}{(\kk_j !) (\ll_j !)^{|\kk_j|}}  \notag\\
&= n! (-2)^n \sum_{1\leq |\alp|\leq n} |\alp|! (-1)^{|\alp|} \sum_{s=1}^n \sum_{P_s(n,\alp)} \prod_{j=1}^s \frac{ {1/2 \choose \ll}^{|\kk_j|}}{(\kk_j !) } .
\end{align*}
The proof of the lemma is concluded by appealing to \eqref{eq:F:n:derivatives}.
\end{proof}

\subsection{The proof of Theorem~\ref{thm:dD}}

\begin{proof}[Proof of Theorem~\ref{thm:dD}]
The proof is by induction. The case $n=1$ is contained in assumption \eqref{eq:X:n=1}.

We now show the induction step. Fix one coordinate $i$ throughout the proof. Using the multivaried Fa\`a di Bruno formula of Lemma~\ref{lem:multi:Faa} we obtain
\begin{align*}
(\partial_t^{n+1} X_i) (t)
&= \partial_t^{n} (K_i ( \XX(t)) \notag\\
&=n! \sum_{1\leq |\alp|\leq n} (\partial^{\alp} K_i)(\XX(t)) \sum_{s=1}^n \sum_{P_s(n,\alp)} \prod_{j=1}^s \frac{\left( (\partial_t^{\ll_j} \XX)(t) \right)^{\kk_j}}{(\kk_j !) (\ll_j !)^{|\kk_j|}}.
\end{align*}
By appealing to \eqref{eq:K:assumption} and the inductive hypothesis \eqref{eq:X:n}, we obtain
\begin{align*}
|\partial_t^{n+1} X_i|
&\leq C n! \sum_{1\leq |\alp|\leq n} \frac{|\alp|!}{R^{|\alp|}}   \sum_{s=1}^n \sum_{P_s(n,\alp)} \prod_{j=1}^s \frac{\left( (-1)^{\ll_j-1} {1/2 \choose \ll_j} \frac{(2C)^{\ll_j}}{R^{\ll_j-1}} \ll_j!\right)^{|\kk_j|}}{(\kk_j !) (\ll_j !)^{|\kk_j|}} \notag\\
&\leq C n! (-1)^{n} \frac{(2C)^{n}}{R^n}\sum_{1\leq |\alp|\leq n} (-1)^{|\alp|} |\alp|!   \sum_{s=1}^n \sum_{P_s(n,\alp)} \prod_{j=1}^s \frac{{1/2 \choose \ll_j}^{|\kk_j|}}{(\kk_j !) }\notag\\
&= C n! (-1)^{n} \frac{(2C)^{n}}{R^n} 2 (n+1) {1/2 \choose n+1} \notag\\
& = (-1)^{n} (n+1)! \frac{(2C)^{n+1}}{R^n} {1/2 \choose n+1} n!.
\end{align*}
In the second-to-last inequality we have essentially used Lemma~\ref{lem:multi:magic}. With \eqref{eq:factorial:bound}, the proof is complete.
\end{proof}

\section{Lagrangian analyticity for the SQG equation}
\label{sec:SQG}

In this section we give the proof of Theorem~\ref{thm:main} in the case of the surface quasi-geostrophic equations. The precise statement is:
\begin{theorem}[\bf Lagrangian analyticity for SQG]
Consider initial data $\theta_0 \in C^{1,\gamma} \cap W^{1,1}$, and let $\theta$ be the unique maximal solution of the initial value problem for \eqref{eq:SQG:1}--\eqref{eq:SQG:2}, with $\theta \in L_{\rm loc}^\infty([0,T_*);C^{1,\gamma} \cap W^{1,1})$. Given any $t \in [0,T_*)$, there exists $ T \in (0,  T_* - t)$, with $T = T( \|\nabla u\|_{L^\infty(t,(t+T_*)/2; L^\infty)})$, and $R >0$ with $R= R(t, \|\theta_0\|_{C^{1,\gamma}\cap W^{1,1}},\gamma)$, such that 
\begin{align*}
\|\partial_t^n \XX\|_{L^\infty(t,t+T;C^{1,\gamma})} \leq C n! R^{-n}
\end{align*}
holds for any $n\geq 0$. Here $C$ is a universal constant, and the norm $\|\XX\|_{C^{1,\gamma}}$ is defined in \eqref{eq:X:norm} below. 
In particular, the Lagrangian trajectory $\XX$ is a real analytic function of time, with radius of analyticity $R$.
\end{theorem}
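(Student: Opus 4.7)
The strategy is to run the abstract scheme of Theorem~\ref{thm:dD} against the Lagrangian ODE \eqref{eq:SQG:grad:X:def}--\eqref{eq:SQG:nabla:X:def}, but with two significant twists tailored to the fluid context: the state space is infinite dimensional (we must bound $\XX(\cdot,t)$ in $C^{1,\gamma}$, not just at a point), and the ``kernel'' $\KK$ that plays the role of the right-hand side of the ODE is a singular integral operator rather than a real analytic function of finitely many variables. The overall induction will prove, for a fixed base time $t$ and all $\tau \in [t,t+T]$, a modified Cauchy inequality of the form
\begin{align}
\|\partial_\tau^n \XX(\cdot,\tau)\|_{C^{1,\gamma}} \leq C\, (-1)^{n-1} \comb{n}\,  \frac{(2A)^n}{R^{n-1}}\, n!,
\la{eq:SQG:goal}
\end{align}
from which the statement of the theorem then follows from \eqref{eq:factorial:bound}. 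The constants $A$ and $R$ will be chosen so that $A$ absorbs $\lambda$ from \eqref{lambda} together with Sobolev-type norms of $\theta_0$, while $R$ will be a small multiple of $A^{-1}$. The base case $n=1$ is a consequence of the $C^{1,\gamma}$-regularity of the velocity field and the chord-arc bound \eqref{ac}.

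\textbf{Inductive step.} Differentiating \eqref{eq:SQG:grad:X:def} $n$ times in $\tau$, the integrand produces the time derivatives $\partial_\tau^n [\KK(\XX(\aa,\tau)-\XX(\bb,\tau))]$. Because $\KK(\yy) = \yy^\perp / (2\pi|\yy|^3)$ is real analytic off the origin and homogeneous of degree $-2$, its derivatives satisfy estimates precisely of type \eqref{eq:K:assumption} at each point $\yy = \XX(\aa,\tau) - \XX(\bb,\tau)$ with $C = c\,\lambda^3 / |\aa-\bb|^2$ and $R = |\aa-\bb|/\lambda$; here both bounds follow directly from the chord-arc inequality \eqref{ac}. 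The composition derivative is then expanded via Lemma~\ref{lem:multi:Faa}, yielding a sum indexed by $P_s(n,\alp)$ of products $\prod_j ((\partial_\tau^{\ll_j} \XX)(\aa,\tau) - (\partial_\tau^{\ll_j} \XX)(\bb,\tau))^{\kk_j}$. After substituting the inductive bound \eqref{eq:SQG:goal} on each factor of order $\ll_j < n$ and applying Lemma~\ref{lem:multi:magic}, the combinatorial sum collapses to exactly $2(n+1)\comb{n+1}$, which is precisely the factor required to continue the induction to order $n+1$.

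\textbf{Singular integral estimates.} The remaining, and most technical, ingredient is to make sense of the resulting integrals. I will introduce a real analytic (Gaussian) cutoff separating the region $|\aa-\bb| \leq \rho$ from $|\aa-\bb| > \rho$ with $\rho$ chosen comparable to the putative analyticity radius. On the far region the kernel derivatives are not singular and the integral converges by the assumed $\theta_0 \in W^{1,1}$ (after integration by parts, which is why Lemma~\ref{lem:multi:Faa} is applied in conjunction with the identity $\uu = \mathcal{R}^\perp \theta$ rewritten as a symmetrized principal value). On the near region I use that the Riesz kernel has vanishing mean on spheres to rewrite the singular part as a commutator: each $\XX(\aa)-\XX(\bb)$ factor in the numerator is replaced by $(\nabla_a \XX)(\aa)(\aa-\bb) + O(|\aa-\bb|^{1+\gamma})$, whose top-order part is killed by the spherical cancellation and whose remainder provides exactly the extra $|\aa-\bb|^\gamma$ that controls the $C^{\gamma}$ seminorm. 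This is the standard Calder\'on--Zygmund estimate for flows with a chord-arc hypothesis, and is uniform in $n$.

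\textbf{Control of the $C^{1,\gamma}$ norm and main obstacle.} The $L^\infty$ and $\nabla$-in-$L^\infty$ parts of the norm in \eqref{eq:X:norm} follow by bounding the modulus of the integrand after the commutator decomposition above and invoking the inductive hypothesis on each $\partial_\tau^{\ll_j} \XX$ factor and on each $\partial_\tau^{\ll_j} (\nabla_a \XX)$ factor derived from \eqref{eq:SQG:nabla:X:def}; the latter is handled by the same Faà di Bruno expansion applied to the right-hand side of \eqref{eq:SQG:nabla:X:def}, which also contains a local term $\nabla_a \XX \cdot \nabla_b^\perp \XX^\perp \cdot \nabla_b \theta_0$ handled trivially. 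The main obstacle will be the H\"older seminorm $[\partial_\tau^n \nabla_a \XX]_{C^\gamma}$: here I will take two labels $\aa, \aa'$ with $|\aa - \aa'| = h$, split the Eulerian integral at scale $h$, and on the inner piece use the stronger cancellation $\KK(\XX(\aa)-\XX(\bb)) - \KK(\XX(\aa')-\XX(\bb)) = O(|\aa - \aa'|/|\aa-\bb|^3)$ available by the mean value theorem and the chord-arc bound, so that the resulting integral is controlled by $h^\gamma$ times the inductive $C^{1,\gamma}$ norms, without degrading the combinatorial weight $\comb{n}$ that makes the induction close.
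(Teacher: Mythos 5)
Your architecture matches the paper's: the same induction on modified Cauchy inequalities weighted by $(-1)^{n-1}\comb{n}$, the multivariate Fa\`a di Bruno formula plus the magic identity to collapse the combinatorial sums, the chord-arc condition to transfer kernel bounds from Eulerian to Lagrangian variables, a real analytic (Gaussian) splitting of $\KK$ into near and far pieces, and the mean-zero-on-spheres cancellation (exact in Eulerian variables, by the unit Jacobian of $\XX$) to tame the $|\aa-\bb|^{-2}$ singularity against $[\theta_0]_{C^\gamma}$. However, one ingredient is missing, and it is a genuine gap. In the H\"older seminorm estimate one must bound
\begin{align*}
\int\Big[\partial_t^m\KK_{in}(\XX(\aa)-\XX(\cc))\big(F(\cc)-F(\aa)\big)-\partial_t^m\KK_{in}(\XX(\bb)-\XX(\cc))\big(F(\cc)-F(\bb)\big)\Big]\,d\cc ,
\qquad F=\nabla^\perp\XX^\perp\,\nabla\theta_0,
\end{align*}
over the far region $|\cc-\tfrac{\aa+\bb}{2}|\ge 4|\aa-\bb|$. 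Polarizing produces, besides the kernel-difference term that your mean value theorem argument handles, the term $\big(F(\bb)-F(\aa)\big)\int_{|\cc-\frac{\aa+\bb}{2}|\ge 4|\aa-\bb|}\partial_t^m\KK_{in}(\XX(\aa)-\XX(\cc))\,d\cc$. The density difference contributes $|\aa-\bb|^\gamma$, but the remaining integral of a $|\aa-\cc|^{-2}$-singular kernel over the exterior of a disk of radius $4|\aa-\bb|$ is only $O(\log(1/|\aa-\bb|))$, so the induction does not close at the exact exponent $\gamma$. The paper's fix (the term $L_{34}$) is an extra structural cancellation not present in your sketch: $\KK_{in}=\nabla^\perp K^{(1)}_{in}+\KK^{(2)}_{in}$ with $K^{(1)}_{in}$ only $|\yy|^{-1}$-singular and $\KK^{(2)}_{in}$ bounded; composing with the measure-preserving $\XX$ turns $\nabla^\perp K^{(1)}_{in}$ into a Poisson bracket in $\cc$, which integrates by parts to a boundary integral over the circle $|\cc-\tfrac{\aa+\bb}{2}|=4|\aa-\bb|$ that is $O(1)$ uniformly in $|\aa-\bb|$ and compatible with the $\comb{m+1}$ weights. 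Without this (or an equivalent) cancellation the estimate loses a logarithm.

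A second, smaller omission: the equation for $\nabla_a\XX$ has the product form $\nabla_a\XX\cdot\int\KK(\cdots)(\cdots)$, so after the Leibniz rule the induction must close through a triple convolution $\sum_{0\le m\le r\le n}a_m\, b_{r-m}\, b_{n-r}$ of the binomial weights $a_m=2(m+1)(-1)^m\comb{m+1}$, $b_m=(-1)^{m-1}\comb{m}$. This requires an identity beyond the magic identity --- the paper's Lemma~\ref{lem:combinatorial:new}, proved via generating functions, showing the sum equals $4a_n-b_n$ --- and the resulting constant must be absorbed into the choice of $C_0$, $C_1$. Your proposal treats this step as trivial, but it is exactly where a naive bound would degrade the $\comb{n}$ weight and break the induction. (Also, your mean-value-theorem gain $O(|\aa-\aa'|/|\aa-\bb|^3)$ is exploited on the \emph{outer} region $|\aa-\bb|\gtrsim|\aa-\aa'|$, not the inner one; on the inner region the two kernels are estimated separately against the H\"older continuity of the density.)
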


Take any $t \in (0,T_*)$. Analyticity is a local property of functions, so it is sufficient to follow the Lagrangian paths for a short interval of time $[t,t+T]$ past $t$. Note that from the local existence theory we have the bounds on the size of $\theta(\cdot,t)$. Without loss of generality it is sufficient to give the proof for $t=0$.

Fix a $\lambda \in (1,3/2]$ throughout this section. Let $T \in (0,T_*)$ be such that
\begin{align}
\int_{0}^{T} \|\nabla \uu(t)\|_{L^\infty} dt \leq \log \lambda.
\label{eq:small:time}
\end{align}
The existence of this $T$ is a consequence of the local existence theorem. It follows that the chord-arc condition 
\begin{align}
\frac{1}{\lambda} \leq \frac{|\aa-\bb|}{|\XX(\aa,t)-\XX(\bb,t)|} \leq \lambda
\label{eq:X:Lip:bnd}
\end{align}
holds for any $\aa\neq \bb \in \RR^2$ and any $t \in [0,T]$.

For $\gamma \in (0,1)$, define 
\begin{align}
\|\XX\|_{C^{1,\gamma}} := \|\XX(\aa) - \aa \|_{L^\infty} + \|\nabla_a \XX(\aa)  \|_{L^\infty} + [\nabla_a \XX(\aa)]_{C^\gamma}.
\label{eq:X:norm}
\end{align}
Our goal is to use induction in order to show that there exists 
$
C_0 = C_0(\| \theta_0\|_{C^{1,\gamma} \cap W^{1,1}},\gamma,\lambda ) >0
$
and 
$
C_1 = C_1(\lambda,C_K)>0
$
such that the Cauchy inequalities 
\begin{align}
\|\partial_t^{n}\XX\|_{L^\infty(0,T;C^{1,\gamma})} \le (-1)^{n-1}\,n!\, \left(\begin{array}{c} \frac12\\n \end{array}\right) C_0^n C_1^{n-1}
\label{eq:SQG:TODO}
\end{align}
hold for any $n \geq 0$.
Here $\lambda$ is the chord-arc constant in \eqref{eq:X:Lip:bnd}, and $C_K$ is the kernel-dependent constant from \eqref{eq:K:bound} below.

In order to have the induction base case $n=0$ in \eqref{eq:SQG:TODO} taken care of, we choose
\begin{align}
C_0 \geq \| \XX\|_{L^\infty(0,T;C^{1,\gamma})}.
\label{eq:C0:1}
\end{align}
The right side of \eqref{eq:C0:1} is finite in view of the local existence theorem. To prove the induction step, we need to estimate $\sup_{t\in [0,T]} \|\partial_t^{n+1} \XX(\cdot,t)\|_{L^\infty}$,  $\sup_{t\in [0,T]} \| \partial_t^{n+1} (\nabla_a \XX)(\cdot,t)\|_{L^\infty}$, and lastly the H\"older semi norm $ \sup_{t\in [0,T]} [\partial_t^{n+1} (\nabla_a \XX)(\cdot,t)]_{C^\gamma}$. This is achieved in the following three subsections.

\subsection{The \texorpdfstring{$L^\infty$}{L infinity} estimate}
Recall that
\begin{align*}
\frac{d\XX}{dt}(\aa,t) =  \int \KK(\XX(\aa,t)-\XX(\bb,t))
\theta_0(\bb)\, d\bb.
\end{align*}
where $\KK(\yy) = \yy^\perp/(2\pi |\yy|^{3})$. We need to localize this kernel near the origin with a rapidly decaying real analytic function. For this purpose we use a Gaussian and define 
\begin{align*}
\KK_{in} (\yy) =  \frac{\yy^\perp}{2\pi |\yy|^3} e^{-|\yy|^2}
\quad \mbox{and} \quad
\KK_{out} (\yy) =  \frac{\yy^\perp}{2\pi|\yy|^3} (1- e^{-|\yy|^2})
\end{align*}
so that $\KK = \KK_{in} + \KK_{out}$. There exists a universal constant $C_K \geq 1$ such that
\begin{align}
| \partial^{\alp} \KK_{in}(\yy)| \leq  \frac{C_K^{|\alp|} |\alp|!}{|\yy|^{|\alp|+2}} e^{-|\yy|^2/2}
\quad \mbox{and} \quad
| \partial^{\alp} \KK_{out}(\yy)| \leq \frac{C_K^{|\alp|} |\alp|!}{|\yy|^{|\alp|}}
\label{eq:K:bound}
\end{align}
holds for any multi-index $\alp$ and any $\yy\neq 0$. The proof of the above estimates is given in Section~\ref{app:kernel:bound} below. Moreover, since $\int_{\partial B_1(0)} \KK_{in}(\yy) d\yy = 0$, we write
\begin{align}
\frac{d\XX}{dt}(\aa,t) 
&= \int \KK_{in} (\XX(\aa,t)-\XX(\bb,t))
(\theta_0(\bb)-\theta_0(\aa)) \, d\bb\notag\\ 
&\qquad +  \int \KK_{out} (\XX(\aa,t)-\XX(\bb,t))
\theta_0(\bb)  \, d\bb.
\label{eq:SQG:p:0}
\end{align}
We apply $n$ time derivatives to \eqref{eq:SQG:grad:X:def} and obtain
\begin{align}
\partial_t^{n+1}\XX(\aa,t) 
&= \int \partial_t^n \KK_{in}( \XX(\aa,t)-\XX(\bb,t))\,(\theta_0(\bb)-\theta_0(\aa) )\,d\bb \notag\\
&\quad + \int \partial_t^n \KK_{out}( \XX(\aa,t)-\XX(\bb,t)) \theta_0(\bb)\,d\bb
\label{eq:SQG:p:1}
\end{align}
Fix an index $i \in \{1,2\}$ and let either $K = K_{in,i}$ or $K = K_{out,i}$.
Apply the Fa\`a di Bruno formula in Lemma~\ref{lem:multi:Faa} to obtain
\begin{align}
&\partial^{n}_t(K(\XX(\aa,t)-\XX(\bb,t))) \notag\\ 
&\qquad = n! \sum_{1\leq |\alp|\leq n} (\partial^{\alp} K)(\XX(\aa,t)-\XX(\bb,t))
\sum_{s=1}^n \sum_{P_s(n, \alp)} {\prod_{j=1}^{s}}
\frac{(\partial_t^{\ll_j} (\XX(\aa,t)-\XX(\bb,t)))^{\kk_j}}{(\kk_j!)(\ll_j!)^{|\kk_j|}}
\label{eq:SQG:p:2}
\end{align}
Combining formulas \eqref{eq:SQG:p:1} and \eqref{eq:SQG:p:2} with the inductive assumption \eqref{eq:SQG:TODO} for the Lipschitz norm of $\XX$, and the bound \eqref{eq:K:bound}, we arrive at
\begin{align}
|\partial_t^{n+1} \XX(\aa,t)|
& \leq n! \sum_{1\le|\alp|\le n} \int \frac{|\alp|! C_K^{|\alp|} e^{-| \XX(\aa,t)-\XX(\bb,t)|^2/2}}{|\XX(\aa,t)-\XX(\bb,t)|^{2+|\alp|}}  \notag\\
& \qquad \quad \times\sum_{s=1}^n \sum_{P_s(n, \alp)} {\prod_{j=1}^{s}}\,\frac{\left((-1)^{\ll_j-1} \ll_j!{1/2 \choose \ll_j} C_0^{\ll_j} C_1^{\ll_j-1} |\aa-\bb|\right)^{|\kk_j|}}{(\kk_j!)(\ll_j!)^{|\kk_j|}}\, |\theta_0(\bb)-\theta_0(\aa) |\,d\bb \notag\\
& + n! \sum_{1\le|\alp|\le n} \int \frac{|\alp|! C_K^{|\alp|} }{|\XX(\aa,t)-\XX(\bb,t)|^{|\alp|}}  \notag \\
& \qquad \quad \times\sum_{s=1}^n \sum_{P_s(n, \alp)} {\prod_{j=1}^{s}}\,\frac{\left((-1)^{\ll_j-1} \ll_j!{1/2 \choose \ll_j} C_0^{\ll_j} C_1^{\ll_j-1} |\aa-\bb|\right)^{|\kk_j|}}{(\kk_j!)(\ll_j!)^{|\kk_j|}}\, |\theta_0(\bb)|\,d\bb
\label{eq:SQG:p:3}.
\end{align}
From the definition of $P_s(n,\alp)$ in \eqref{eq:Ps:def}, we recall
\begin{align*}
\sum_{j=1}^s \ll_j |\kk_j| =n, \qquad \sum_{j=1}^s |\kk_j| =|\alp|, 
\end{align*}
and estimate \eqref{eq:SQG:p:3} becomes
\begin{align}
|\partial_t^{n+1} \XX(\aa,t)| 
&\leq  n!\,(-1)^n\, C_0^n C_1^n \sum_{1\le|\alp|\le n} (-1)^{|\alp|} |\alp|! C_K^{|\alp|} C_1^{-|\alp|} 
\sum_{s=1}^n \sum_{P_s(n, \alp)} {\prod_{j=1}^{s}} \frac{{1/2 \choose \ll_j}^{|\kk_j|}}{\kk_j!} ( I_{in} + I_{out}) \label{eq:SQG:p:4}
\end{align}
where
\begin{align*}
I_{in} = \int \frac{|\aa-\bb|^{|\alp|} e^{- |\XX(\aa,t) - \XX(\bb,t)|^2 /2}}{|\XX(\aa,t) - \XX(\bb,t)|^{2+|\alp|}} |\theta_0(\bb) - \theta_0(\aa)|d\bb
\end{align*}
and
\begin{align*}
I_{out} = \int \frac{|\aa-\bb|^{|\alp|}}{|\XX(\aa,t) - \XX(\bb,t)|^{|\alp|}} |\theta_0(\bb)| d\bb.
\end{align*}
Using the chord-arc condition \eqref{eq:X:Lip:bnd}, and
\begin{align*}
|\theta_0(\bb)-\theta_0(\aa)| \le [\theta_0]_{C^\gamma}|\aa-\bb|^\gamma,
\end{align*}
we estimate
\begin{align*}
I_{in} \leq [\theta_0]_{C^\gamma} \lambda^{2+|\alp|} \int |\aa-\bb|^{\gamma-2} e^{- |\aa-\bb|^2/(2\lambda^2)} d\bb \leq 8 \lambda^2( \gamma^{-1} +  \lambda ) [\theta_0]_{C^\gamma} \lambda^{|\alp|}.
\end{align*}
On the other hand,  \eqref{eq:X:Lip:bnd} also yields
\begin{align*}
I_{out} \leq \lambda^{|\alp|} \|\theta_0\|_{L^1},
\end{align*}
so that
\begin{align}
I_{in} + I_{out} \leq |\lambda|^{|\alp|} \left(8 \lambda^2( \gamma^{-1} + \lambda ) [\theta_0]_{C^\gamma} + \|\theta_0\|_{L^1} \right)
\label{eq:SQG:p:5}
\end{align}
Therefore, if  we let
\begin{align}
C_1 \geq C_K \lambda
\label{eq:C1:1}
\end{align}
and 
\begin{align}
\frac{C_0}{2} \geq 8 \lambda^2 ( \gamma^{-1} + \lambda ) [\theta_0]_{C^\gamma} + \|\theta_0\|_{L^1} \label{eq:C0:2},
\end{align}
from \eqref{eq:SQG:p:4} and \eqref{eq:SQG:p:5} we conclude
\begin{align}
|\partial_t^{n+1} \XX(\aa,t)| 
& \leq  \frac 12  n!  (-1)^n\, C_0^{n+1} C_1^n \sum_{1\le|\alp|\le n} (-1)^{|\alp|} |\alp|! 
\sum_{s=1}^n \sum_{P_s(n, \alp)} {\prod_{j=1}^{s}} \frac{{1/2 \choose \ll_j}^{|\kk_j|}}{\kk_j!}\notag\\
&\leq  (n+1)!\,(-1)^n {1/2 \choose n+1} C_0^{n+1} C_1^n
\label{eq:SQG:p:6}
\end{align}
where in the last inequality we have appealed to Lemma~\ref{lem:multi:magic}. Estimate \eqref{eq:SQG:p:6} proves the $L^\infty$ portion of the induction step in \eqref{eq:SQG:TODO}.

\subsection{The Lipschitz estimate}
\label{sec:Lip}
Similarly to \eqref{eq:SQG:p:0}, we decompose \eqref{eq:SQG:nabla:X:def} as
\begin{align}
&\frac{d(\nabla_a \XX)}{dt} (\aa,t)  \notag\\
&\ = \nabla_a\XX(\aa,t) \int \KK_{in}(\XX(\aa,t)-\XX(\bb,t)) \left( \nabla_b^\perp \XX^{\perp}(\bb,t)\, \nabla_b \theta_0(\bb) - \nabla_a^\perp \XX^{\perp}(\aa,t)\, \nabla_a \theta_0(\aa) \right) \,d\bb \notag\\
&\qquad + \nabla_a\XX(\aa,t) \int \KK_{out}(\XX(\aa,t)-\XX(\bb,t)) \nabla_b^\perp \XX^{\perp}(\bb,t)\, \nabla_b \theta_0(\bb) \,d\bb.
\label{eq:SQG:g:0}
\end{align}
To estimate the $L^\infty$ norm of $\partial_t^{n+1} (\nabla_a \XX)$, we apply $\partial_t^n$ to \eqref{eq:SQG:g:0}.  By the Leibniz rule we obtain
\begin{align}
&\partial^{n+1}_t \nabla_a \XX (\aa,t) \notag\\
&\quad = \sum_{0\le m\le r \le n} {n \choose r} {r \choose m}
\partial^{n-r}_t\nabla_a \XX(\aa,t) \notag\\
&\qquad \qquad \times \int \partial^m_t \KK_{in}(\XX(\aa,t)-\XX(\bb,t))    \partial_t^{r-m} (\nabla_b^\perp \XX^{\perp}(\bb,t) \nabla_b \theta_0(\bb)-\nabla_a^\perp \XX^{\perp}(\aa,t) \nabla_a \theta_0(\aa)  )\,d\bb \notag\\
&\quad + \sum_{0\le m\le r \le n} {n \choose r} {r \choose m}
\partial^{n-r}_t\nabla_a \XX(\aa,t) \notag\\
&\qquad \qquad \times \int \partial^m_t \KK_{out}(\XX(\aa,t)-\XX(\bb,t))    \partial_t^{r-m} (\nabla_b^\perp \XX^{\perp}(\bb,t)) \nabla_b \theta_0(\bb)\,d\bb.
\label{eq:SQG:g:1}
\end{align}
Invoking the inductive assumption \eqref{eq:SQG:TODO}, we have
\begin{align}
|\partial^{n-r}_t\nabla_a \XX(\aa,t)| \leq (-1)^{n-r-1}\,(n-r)!\, {1/2 \choose n-r}  C_0^{n-r} C_1^{n-r-1}.
\label{eq:SQG:g:2}
\end{align}
Also, in view of \eqref{eq:SQG:TODO} we  estimate
\begin{align}
&|\partial^{r-m}_t(\nabla_b^{{\perp}} \XX^{\perp}(\bb,t)) \nabla_b \theta_0(\bb)- \partial^{r-m}_t(\nabla_a^{{\perp}} \XX^{\perp}(\aa,t)) \nabla_a \theta_0(\aa)| \notag \\
& \qquad\qquad \qquad \le (-1)^{r-m-1}\,(r-m)!\, {1/2 \choose r-m} C_0^{r-m} C_1^{r-m-1}  |\aa-\bb|^\gamma \|\nabla \theta_0\|_{C^\gamma}
\label{eq:SQG:g:3}
\end{align}
and
\begin{align}
| \partial_t^{r-m} (\nabla_b^{{\perp}} \XX^{\perp}(\bb,t)) \nabla_b\theta_0(\bb)| \leq (-1)^{r-m-1}\,(r-m)!\, {1/2 \choose r-m} C_0^{r-m} C_1^{r-m-1} |\nabla_b \theta_0(\bb)|.
\label{eq:SQG:g:4}
\end{align}
Let $i \in \{1,2\}$. Using \eqref{eq:SQG:p:2} and \eqref{eq:K:bound} we  bound
\begin{align}
&|\partial_t^m K_{in,i}(\XX(\aa,t)-\XX(\bb,t))| \notag\\
&\quad \leq m! \sum_{1\leq |\alp|\leq m} 
\frac{C_K^{|\alp|} |\alp|! e^{-|\XX(\aa,t)-\XX(\bb,t)|^2/2}
}{|\XX(\aa,t)-\XX(\bb,t)|^{|\alp|+2}} 
\sum_{s=1}^m \sum_{P_s(m, \alp)} {\prod_{j=1}^{s}}
\frac{( \|\partial_t^{\ll_j} \nabla \XX(\cdot,t) \|_{L^\infty} |\aa-\bb|)^{\kk_j}}{(\kk_j!)(\ll_j!)^{|\kk_j|}} \notag\\
&\quad \leq m!   \sum_{1\leq |\alp|\leq m} 
C_K^{|\alp|} |\alp|! T_{in}
\sum_{s=1}^m \sum_{P_s(m, \alp)} {\prod_{j=1}^{s}}
\frac{\left((-1)^{\ll_j-1} \ll_j!{1/2 \choose \ll_j} C_0^{\ll_j} C_1^{\ll_j-1} \right)^{|\kk_j|}}{(\kk_j!)(\ll_j!)^{|\kk_j|}}\notag\\
&\quad \leq (-1)^m m! C_0^m C_1^m  \sum_{1\leq |\alp|\leq m} 
(-1)^{|\alp|} C_K^{|\alp|} C_1^{-|\alp|} |\alp|! T_{in}
\sum_{s=1}^m \sum_{P_s(m, \alp)} {\prod_{j=1}^{s}} 
\frac{{1/2 \choose \ll_j}^{|\kk_j|}}{\kk_j!}
\label{eq:SQG:g:5:00}
\end{align}
where
\begin{align*}
T_{in} =  \frac{|\aa-\bb|^{|\alp|} e^{-|\XX(\aa,t)-\XX(\bb,t)|^2/2}}{|\XX(\aa,t)-\XX(\bb,t)|^{|\alp|+2}}.
\end{align*}
Using the chord-arc condition \eqref{eq:X:Lip:bnd} we arrive at
\begin{align*}
T_{in} \leq |\aa-\bb|^{-2} e^{-|\aa-\bb|^2/(2\lambda^2)} \lambda^{|\alp|+2} 
\end{align*}
and recalling that $C_1 \geq \lambda C_K$, we obtain from \eqref{eq:SQG:g:5:00} that
\begin{align}
&|\partial_t^m K_{in,i}(\XX(\aa,t)-\XX(\bb,t))| \notag\\
&\quad \leq (-1)^m m! C_0^m C_1^m |\aa-\bb|^{-2} e^{-|\aa-\bb|^2/(2\lambda^2)}  \lambda^2 \sum_{1\leq |\alp|\leq m} 
(-1)^{|\alp|} |\alp|! 
\sum_{s=1}^m \sum_{P_s(m, \alp)} \sum_{j=1}^{s} 
\frac{{1/2 \choose \ll_j}^{|\kk_j|}}{\kk_j!} \notag\\
&\quad \leq (-1)^m m! C_0^m C_1^m |\aa-\bb|^{-2} e^{-|\aa-\bb|^2/(2\lambda^2)}  \lambda^2 2 (m+1) {1/2 \choose m+1}\label{eq:SQG:g:5:01}
\end{align}
where in the last equality we have appealed to Lemma~\ref{lem:multi:magic}.
Similarly, from \eqref{eq:SQG:p:2} and \eqref{eq:K:bound} we have
\begin{align}
&|\partial_t^m K_{out,i}(\XX(\aa,t)-\XX(\bb,t))| \notag\\
&\quad \leq (-1)^m m! C_0^m C_1^m  \sum_{1\leq |\alp|\leq m} 
(-1)^{|\alp|} C_K^{|\alp|} C_1^{-|\alp|} |\alp|! T_{out}
\sum_{s=1}^m \sum_{P_s(m, \alp)} {\prod_{j=1}^{s}}
\frac{{1/2 \choose \ll_j}^{|\kk_j|}}{\kk_j!}.
\label{eq:SQG:g:5:10}
\end{align}
Using \eqref{eq:X:Lip:bnd} we arrive at
\begin{align*}
T_{out} =  \frac{|\aa-\bb|^{|\alp|}}{|\XX(\aa,t)-\XX(\bb,t)|^{|\alp|}} \leq \lambda^{|\alpha|}.
\end{align*}
Therefore, appealing to Lemma~\ref{lem:multi:magic} we arrive at
\begin{align}
|\partial_t^m K_{out,i}(\XX(\aa,t)-\XX(\bb,t))|  \leq (-1)^m m! C_0^m C_1^m 2 (m+1) {1/2 \choose m+1}.
\label{eq:SQG:g:5:11}
\end{align}

Combining \eqref{eq:SQG:g:1}--\eqref{eq:SQG:g:4}, \eqref{eq:SQG:g:5:01}, and \eqref{eq:SQG:g:5:11}, we arrive at
\begin{align}
&|\partial^{n+1}_t \nabla_a \XX (\aa,t)| \notag\\
&\quad \leq I  \sum_{0\le m\le r \le n} {n \choose r} {r \choose m}
(-1)^{n-r-1}\,(n-r)!\, {1/2 \choose n-r}  C_0^{n-r} C_1^{n-r-1} \notag\\
&\qquad \qquad   \times  (-1)^m m! C_0^m C_1^m 2 (m+1) {1/2 \choose m+1}\,  (-1)^{r-m-1}\,(r-m)!\, {1/2 \choose r-m} C_0^{r-m} C_1^{r-m-1} 
\label{eq:SQG:g:6}
\end{align}
where
\begin{align}
I 
&= \lambda^2 \|\nabla \theta_0\|_{C^\gamma} \int |\aa-\bb|^{\gamma-2} e^{-|\aa-\bb|^2/(2\lambda^2)} d\bb + \int |\nabla_b \theta_0 (\bb)| d\bb \notag\\
&\leq 8( \gamma^{-1} +\lambda) \lambda^2 \|\nabla \theta_0\|_{C^\gamma} + \|\nabla \theta_0\|_{L^1} \notag\\
&\leq \frac 18 {C_1^2} C_0 
\label{eq:C0:3}
\end{align}
by making $C_0$ sufficiently large, depending on the initial data.
The above and \eqref{eq:SQG:g:6} imply
\begin{align}
&|\partial^{n+1}_t \nabla_a \XX (\aa,t)| \notag\\
&\quad \leq  \frac 14 C_0^{n+1} C_1^{n}  n! \sum_{0\le m\le r \le n} 
(-1)^{n-r-1}\, {1/2 \choose n-r}\,  (-1)^m   (m+1) {1/2 \choose m+1}\,  (-1)^{r-m-1}\, {1/2 \choose r-m} 
\label{eq:SQG:g:7}
\end{align}
At this stage we invoke another combinatorial identity.
\begin{lemma} \label{lem:combinatorial:new}
We have that
\begin{align}
\sum_{0\le m\le r\le n} (m+1)\,(-1)^m{1/2 \choose m+1} (-1)^{r-m-1} {1/2 \choose r-m} 
(-1)^{n-r-1} {1/2 \choose n-r} \leq 4 (n+1) (-1)^{n} {1/2 \choose n+1}
\label{eq:combinatorial:new}
\end{align}
holds for any integer $n\geq 1$.
\end{lemma}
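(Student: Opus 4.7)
My plan is to follow the strategy of Lemma~\ref{lem:multi:magic} and realize the triple sum on the left-hand side of \eqref{eq:combinatorial:new} as the $n$-th Taylor coefficient of an explicit elementary function, and then evaluate and compare it with the right-hand side. Define the formal power series
\[
\phi(t) := \sum_{k \geq 0} (-1)^{k-1} \binom{1/2}{k} t^k,
\]
using the paper's sign convention $\binom{1/2}{0} = -1$, so that the $k = 0$ contribution is $+1$. The binomial series for $(1-t)^{1/2}$ then identifies $\phi(t) = 2 - \sqrt{1-t}$, and termwise differentiation gives
\[
\phi'(t) = \sum_{m \geq 0}(m+1)(-1)^m \binom{1/2}{m+1} t^m = \frac{1}{2\sqrt{1-t}}.
\]

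Reindexing the triple sum via $k = r-m$, $j = n-r$ (so that $m, k, j \geq 0$ and $m+k+j = n$) shows that it is exactly the Cauchy product coefficient
\[
[t^n]\, \phi'(t)\, \phi(t)^2 = [t^n]\, \frac{(2-\sqrt{1-t})^2}{2\sqrt{1-t}} = [t^n] \left( \frac{2}{\sqrt{1-t}} - 2 + \frac{\sqrt{1-t}}{2} \right).
\]
For $n \geq 1$ the constant $-2$ drops out and the two remaining pieces are standard binomial series, giving
\[
\text{LHS of \eqref{eq:combinatorial:new}} = 2(-1)^n \binom{-1/2}{n} + \tfrac{1}{2} (-1)^n \binom{1/2}{n}.
\]

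The final step rests on the core algebraic identity
\[
2(-1)^n \binom{-1/2}{n} = 4(n+1)(-1)^n \binom{1/2}{n+1} = \frac{(2n-1)!!}{2^{n-1}\, n!},
\]
which I would verify via the double-factorial formulas $(-1)^n \binom{-1/2}{n} = (2n-1)!!/(2^n n!)$ and $(-1)^n \binom{1/2}{n+1} = (2n-1)!!/(2^{n+1}(n+1)!)$. Combined with the sign convention $(-1)^{n-1} \binom{1/2}{n} \geq 0$ built into the paper, the residual term satisfies $\tfrac{1}{2}(-1)^n \binom{1/2}{n} \leq 0$ for $n \geq 1$, so
\[
\text{LHS} = 4(n+1)(-1)^n \binom{1/2}{n+1} + \tfrac{1}{2}(-1)^n \binom{1/2}{n} \leq 4(n+1)(-1)^n \binom{1/2}{n+1},
\]
which is \eqref{eq:combinatorial:new}. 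The only subtle point, and the one I would verify most carefully, is the bookkeeping of the boundary cases $r = m$ and $r = n$ (i.e.\ $k = 0$ or $j = 0$) in the triple sum; the non-standard convention $\binom{1/2}{0} = -1$ is precisely calibrated so that these degenerate contributions are correctly encoded by the $+1$ constant term of $\phi(t)$, which is what makes the generating function identification term-by-term correct and the closed form $\phi(t) = 2 - \sqrt{1-t}$ usable.
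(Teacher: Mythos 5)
Your proof is correct and follows essentially the same route as the paper: both identify the triple sum as the $n$-th coefficient of the Cauchy product of the generating functions $2-\sqrt{1-t}$ (squared) and its derivative $\tfrac12(1-t)^{-1/2}$, expand the resulting elementary function, and observe that the leftover term $\tfrac12(-1)^n\binom{1/2}{n}$ is nonpositive (the paper computes the exact value $\bigl(4-\tfrac{1}{2n-1}\bigr)(n+1)(-1)^n\binom{1/2}{n+1}$, which amounts to the same estimate). Your handling of the boundary terms via the convention $\binom{1/2}{0}=-1$ and the double-factorial identification $2(-1)^n\binom{-1/2}{n}=4(n+1)(-1)^n\binom{1/2}{n+1}$ both check out.
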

The proof of Lemma~\ref{lem:combinatorial:new} is given in Section~\ref{sec:proof:combinatorial:lemma} below. From \eqref{eq:SQG:g:7} and \eqref{eq:combinatorial:new} we conclude
\begin{align*}
|\partial^{n+1}_t \nabla_a \XX (\aa,t)|  
\leq  C_0^{n+1} C_1^{n} \frac{n!}{2}   \,(-1)^{n-1} {1/2 \choose n}
\leq C_0^{n+1} C_1^n (-1)^n (n+1)!  {1/2 \choose n+1}
\end{align*}
which concludes the proof of the Lipschitz estimate in the induction step for \eqref{eq:SQG:TODO}.

\subsection{The H\"older estimate for \texorpdfstring{$\nabla_a \XX$}{Grad X}}
{In order to} prove that $[\partial^{n+1}_t \nabla X (a,t)]_{C^\gamma}$ obeys the bound \eqref{eq:SQG:TODO}, we consider the difference
\begin{align*}
\partial^{n+1}_t \nabla \XX (\aa,t) - \partial^{n+1}_t \nabla \XX (\bb,t)
\end{align*}
and estimate it in a similar fashion to $|\partial^{n+1}_t \nabla_a \XX (\aa,t)|$. However, before applying $n$ time derivatives, we use \eqref{eq:SQG:g:0} to re-write
\begin{align}
&\frac{d}{dt} \left( \nabla \XX(\aa,t) - \nabla \XX(\bb,t) \right) \notag\\
&= (\nabla \XX(\aa,t) - \nabla \XX(\bb,t) ) \int \KK_{in} (\XX(\aa,t) - \XX(\cc,t)) (\nabla^\perp \XX^\perp(\cc,t)  \nabla \theta_0 (\cc) - \nabla^\perp \XX^\perp(\aa,t) \nabla\theta_0(\aa) ) d\cc \notag\\
&\ +(\nabla \XX(\aa,t) - \nabla \XX(\bb,t) ) \int \KK_{out} (\XX(\aa,t) - \XX(\cc,t)) \nabla^\perp \XX^\perp(\cc,t) \nabla \theta_0 (\cc) d\cc \notag\\
&\ +{\nabla \XX(\bb,t)  \int  \Big[ \KK_{in}(\XX(\aa,t)-\XX(\cc,t)) \Big(\nabla^\perp \XX^\perp(\cc,t)  \nabla \theta_0 (\cc) - \nabla^\perp \XX^\perp(\aa,t)  \nabla\theta_0(\aa)  \Big) } \notag\\
&\qquad \qquad \qquad \qquad \qquad {- \KK_{in}(\XX(\bb,t)-\XX(\cc,t))   \Big(  \nabla^\perp \XX^\perp(\cc,t)  \nabla \theta_0 (\cc) - \nabla^\perp \XX^\perp(\bb,t) \nabla\theta_0(\bb) \Big) \Big] d\cc } \notag\\
&\ + \nabla \XX(\bb,t) \int \Big(\KK_{out}(\XX(\aa,t)-\XX(\cc,t)) - \KK_{out}(\XX(\bb,t)-\XX(\cc,t)) \Big) \nabla^\perp \XX^\perp(\cc,t)  \nabla \theta_0(\cc) d\cc.
\label{eq:SQG:h:0}
\end{align}
%where we have essentially used the cancellation 
%\begin{align*}
%%\frac{\nabla \XX(\bb,t)}{2} (\nabla \XX(\bb,t)^\perp \nabla \theta_0 (\bb) - \nabla \XX(\aa,t)^\perp \nabla\theta_0(\aa) )  
%\int \Big( \KK_{in}(\XX(\aa,t)-\XX(\cc,t))  + \KK_{in}(\XX(\bb,t)-\XX(\cc,t)) \Big)d\cc\;  = 0
%\end{align*}
%which holds since $\KK_{in}(\yy)$ has zero mean on spheres $|\yy|=R$ and since $ \det (\nabla \AA) = 1$.
In view of \eqref{eq:SQG:h:0}, similarly to \eqref{eq:SQG:g:1} we write 
\begin{align*}
&\partial^{n+1}_t \nabla \XX (\aa,t) - \partial^{n+1}_t \nabla \XX (\bb,t) = L_1 + L_2 +  L_3 + L_4,
\end{align*}
where
\begin{align}
L_1 &= \sum_{0\le m\le r \le n} {n \choose r} {r \choose m}
\left( \partial^{n-r}_t\nabla \XX(\aa,t) -\partial^{n-r}_t\nabla \XX(\bb,t)\right) \notag\\
&\qquad  \times \int \partial^m_t \KK_{in}(\XX(\aa,t)-\XX(\cc,t))    \partial_t^{r-m} (\nabla^\perp \XX^\perp(\cc,t)  \nabla \theta_0(\cc)-\nabla^\perp \XX^\perp(\aa,t)  \nabla \theta_0(\aa)  )\,d\cc
\label{eq:SQG:h:1} \\
L_2 &= \sum_{0\le m\le r \le n} {n \choose r} {r \choose m}
\left( \partial^{n-r}_t\nabla \XX(\aa,t) - \partial^{n-r}_t\nabla \XX(\bb,t) \right) \notag\\
&\qquad  \times \int \partial^m_t \KK_{out}(\XX(\aa,t)-\XX(\cc,t))    \partial_t^{r-m} (\nabla^\perp \XX^\perp(\cc,t) ) \nabla \theta_0(\cc)\,d\cc 
\label{eq:SQG:h:2} \\
L_3&= \frac 12 \sum_{0\le m\le r \le n} {n \choose r} {r \choose m}
\partial^{n-r}_t\nabla \XX(\bb,t) \notag\\
&\qquad  \times { \int \Big[   \partial^m_t \KK_{in}(\XX(\aa,t)-\XX(\cc,t)) \partial_t^{r-m} \Big(\nabla^\perp \XX^\perp(\cc,t)  \nabla \theta_0(\cc)-\nabla^\perp \XX^\perp(\aa,t)  \nabla \theta_0(\aa)\Big) }\notag \\
&\qquad \quad \ {-  \partial^m_t \KK_{in}(\XX(\bb,t)-\XX(\cc,t))  \partial_t^{r-m} \Big( \nabla^\perp \XX^\perp(\cc,t) \nabla \theta_0(\cc)-\nabla^\perp \XX^\perp(\bb,t) \nabla \theta_0(\bb)  \Big) \Big] d\cc }
\label{eq:SQG:h:3}\\
L_4&=  \sum_{0\le m\le r \le n} {n \choose r} {r \choose m}
\partial^{n-r}_t\nabla \XX(\bb,t) \notag\\
&\qquad  \times \int \Big( \partial^m_t \KK_{out}(\XX(\aa,t)-\XX(\cc,t))  - \partial^m_t \KK_{out}(\XX(\bb,t)-\XX(\cc,t))  \Big) \notag\\
&\qquad  \qquad \times
 \partial_t^{r-m} (\nabla^\perp \XX^\perp(\cc,t) ) \nabla \theta_0(\cc) d\cc.
 \label{eq:SQG:h:4}
\end{align}
First we notice that by using the bound 
\begin{align*}
|\partial^{n-r}_t\nabla^\perp \XX^\perp(\aa,t) -\partial^{n-r}_t\nabla^\perp \XX^\perp(\bb,t) | \leq |\aa-\bb|^\gamma (-1)^{n-r-1} (n-r)! {1/2 \choose n-r} C_0^{n-r} C_1^{n-r-1}
\end{align*}
instead of \eqref{eq:SQG:g:2}, precisely as in Section~\ref{sec:Lip} above we show that
\begin{align}
L_1 + L_2 \leq \frac 12 |\aa - \bb|^\gamma C_0^{n+1} C_1^n (-1)^n (n+1)!  {1/2 \choose n+1}
\label{eq:SQG:h:5}
\end{align}
under precisely the same conditions on $C_0$ and $C_1$ as above. 

In order to estimate $L_3$, we decompose it as
\begin{align*}
L_3 = L_{31} + L_{32} + L_{33} + L_{34},
\end{align*}
where
\begin{align*}
L_{31}&= \sum_{0\le m\le r \le n} {n \choose r} {r \choose m}
\partial^{n-r}_t\nabla \XX(\bb,t) \notag\\
&\  \times \int_{|\cc - \frac{\aa+\bb}{2}| \leq 4 |\aa-\bb|}  \partial^m_t \KK_{in}(\XX(\aa,t)-\XX(\cc,t))    \partial_t^{r-m} (\nabla^\perp \XX^\perp(\cc,t)  \nabla \theta_0(\cc)-\nabla^\perp \XX^\perp(\aa,t)  \nabla \theta_0(\aa)  )  d\cc\\
L_{32}&= - \sum_{0\le m\le r \le n} {n \choose r} {r \choose m}
\partial^{n-r}_t\nabla \XX(\bb,t) \notag\\
&\  \times \int_{|\cc - \frac{\aa+\bb}{2}| \leq 4 |\aa-\bb|}  \partial^m_t \KK_{in}(\XX(\bb,t)-\XX(\cc,t))    \partial_t^{r-m} (\nabla^\perp \XX^\perp(\cc,t)  \nabla \theta_0(\cc)-\nabla^\perp \XX^\perp(\bb,t)  \nabla \theta_0(\bb)  )  d\cc
\end{align*}
account for the singular pieces, and  
\begin{align*}
L_{33}&=  \frac 12  \sum_{0\le m\le r \le n} {n \choose r} {r \choose m}
\partial^{n-r}_t\nabla \XX(\bb,t) \Big(\XX(\aa,t) - \XX(\bb,t)\Big) \notag\\
&\qquad \times \int_{|\cc - \frac{\aa+\bb}{2}| \geq 4 |\aa-\bb|} \int_0^1 \partial_t^m \nabla \KK_{in}( \rho \XX(\aa,t) + (1-\rho) \XX(\bb,t) - \XX(\cc,t)) d\rho \notag \\
&\qquad  \qquad \qquad \qquad\times {\partial_t^{r-m} \Big(2 \nabla^\perp \XX^\perp(\cc,t) \nabla \theta_0(\cc)-\nabla^\perp \XX^\perp(\aa,t) \nabla \theta_0(\aa) -\nabla^\perp \XX^\perp(\bb,t) \nabla \theta_0(\bb)   \Big) } d\cc\\
L_{34}&=  \frac 12  \sum_{0\le m\le r \le n} {n \choose r} {r \choose m}
\partial^{n-r}_t\nabla \XX(\bb,t) {\partial_t^{r-m} \Big( \nabla^\perp \XX^\perp(\bb,t) \nabla \theta_0(\bb) -\nabla^\perp \XX^\perp(\aa,t)  \nabla \theta_0(\aa)   \Big) }  \notag\\
&\qquad \times \int_{|\cc - \frac{\aa+\bb}{2}| \geq 4 |\aa-\bb|}  \Big( \partial_t^m  \KK_{in}( \XX(\aa,t) - \XX(\cc,t)) + \partial_t^m  \KK_{in}( \XX(\bb,t) - \XX(\cc,t)) \Big) d\cc  
\end{align*}
{account for the pieces at infinity.} {Here, we have used the polarization identity $L_{33}+L_{34} =  x_1y_1-x_2y_2= (x_1-x_2)(y_1+y_2)/2 + (x_1+x_2)(y_1-y_2)/2$.} Moreover, for the term $L_{33}$ in the above decomposition we have used the mean value theorem to  write
\begin{align*}
&\partial^m_t \KK_{in}(\XX(\aa,t)-\XX(\cc,t))  -  \partial^m_t \KK_{in}(\XX(\bb,t)-\XX(\cc,t)) \notag\\
&\qquad = (\XX(\aa,t)-\XX(\bb,t)) \cdot \int_0^1 \partial_t^m \nabla \KK_{in}( \rho \XX(\aa,t) + (1-\rho) \XX(\bb,t) - \XX(\cc,t)) d\rho.
\end{align*}
We first bound $L_{31}$ and $L_{32}$. We appeal to \eqref{eq:SQG:g:2}, \eqref{eq:SQG:g:3}, \eqref{eq:SQG:g:5:01},  \eqref{eq:SQG:g:6}, and Lemma~\ref{lem:combinatorial:new} to obtain
\begin{align}
L_{31} + L_{32} 
&\leq  C_0^{n}C_1^{n} (-1)^n (n+1)! {1/2 \choose n+1} I_{3,in} 
\label{eq:SQG:h:7}
\end{align}
where
\begin{align*}
I_{3,in} 
&=\lambda^2 C_1^{-2} \|\nabla \theta_0\|_{C^\gamma} \int_{|\cc - \frac{\aa+\bb}{2}| \leq 4 |\aa-\bb|} |\bb-\cc|^{\gamma-2} e^{-|\bb-\cc|^2/(2\lambda^2)} + |\aa-\cc|^{\gamma-2} e^{-|\aa-\cc|^2/(2\lambda^2)} d\cc \notag\\
&\leq 20 \pi \gamma^{-1} C_K^{-2} \|\nabla \theta_0\|_{C^\gamma} |\aa-\bb|^{\gamma}.
\end{align*}
since $C_1 \geq \lambda C_K$. Letting
\begin{align}
C_0 \geq 160 \pi \gamma^{-1} C_K^{-2} \|\nabla \theta_0\|_{C^\gamma} 
\label{eq:C0:4}
\end{align}
we obtain in combination with \eqref{eq:SQG:h:7} that
\begin{align}
L_{31} + L_{32} 
&\leq \frac 18  |\aa-\bb|^{\gamma} C_0^{n+1}C_1^{n} (-1)^n (n+1)! {1/2 \choose n+1}
\label{eq:SQG:h:8}
\end{align}
holds. In order to estimate $L_{33}$, we notice that due to the chord-arc condition, 
\begin{align*}
| \XX(\bb,t) - \XX(\cc,t) - \bb + \cc| 
&\leq \lambda |\bb-\cc| \int_0^t \|\nabla u(s)\|_{L^\infty} ds \leq \lambda \log \lambda |\bb-\cc|,
\end{align*}
and similarly for $\aa$ and $\cc$.
Thus, we have that 
\begin{align*}
& |\rho \XX(\aa,t) + (1-\rho) \XX(\bb,t) - \XX(\cc,t)| \notag \\
&\ \geq  |\rho \aa + (1-\rho) \bb - \cc| - \rho |\XX(\aa,t) - \aa - \XX(\cc,t) + \cc| - (1-\rho) | \XX(\bb,t)- \bb - \XX(\cc,t) + \cc| \notag\\
&\ \geq |\cc- (\aa+\bb)/2| - |\aa-\bb|/2 - \lambda \log \lambda ( \rho |\aa-\cc| + (1-\rho) |\bb-\cc| ) \notag \\
&\ \geq |\cc- (\aa+\bb)/2| - |\aa-\bb|/2 - \lambda \log \lambda ( |\cc- (\aa+\bb)/2| + |\aa-\bb|/2 )
\end{align*}
holds for any $\rho \in (0,1)$. Therefore, in view of the choice $\lambda \in (1,3/2]$ we have that $\lambda \log \lambda \leq 2/3$, and thus
\begin{align}
 |\rho \XX(\aa,t) + (1-\rho) \XX(\bb,t) - \XX(\cc,t)| \geq |\cc- (\aa+\bb)/2|/3 - |\aa-\bb| \geq |\cc- (\aa+\bb)/2|/12\end{align}
holds whenever  $|\cc - (\aa + \bb)/2| \geq 4 |\aa-\bb|$. Using \eqref{eq:K:bound} and \eqref{eq:SQG:p:2} we thus  bound
\begin{align}
&\int_0^1 | \partial_t^m \nabla \KK_{in}( \rho \XX(\aa,t) + (1-\rho) \XX(\bb,t) - \XX(\cc,t) )| d\rho \notag\\ 
&\quad  
\leq m! \sum_{1\leq |\alp|\leq m}  | \partial^{\alp} \nabla \KK_{in}(\rho \XX(\aa,t) + (1-\rho) \XX(\bb,t) - \XX(\cc,t) )|\notag\\
&\qquad \qquad \qquad \times
\sum_{s=1}^m \sum_{P_s(m, \alp)} \sum_{j=1}^{s} 
\frac{(\rho | \partial_t^{\ll_j} (\XX(\aa,t)-\XX(\cc,t))| +  (1-\rho) |\partial_t^{\ll_j} (\XX(\bb,t)-\XX(\cc,t)|)^{\kk_j}}{(\kk_j!)(\ll_j!)^{|\kk_j|}}  \notag\\
&\quad  
\leq m! \sum_{1\leq |\alp|\leq m} 
\frac{C_K^{|\alp|+1} (12)^{|\alp|+3} (|\alp|+1)! e^{-|\cc-(\aa+\bb)/2|^2/(288)}}{|\cc- (\aa+\bb)/2|^{|\alp|+3}} \notag\\
&\qquad \qquad \qquad \times
\sum_{s=1}^m \sum_{P_s(m, \alp)} \sum_{j=1}^{s} 
\frac{\left(\frac{9 \lambda|\cc - (\aa+\bb)/2|}{8} (-1)^{\ll_j-1} {1/2 \choose \ll_j} C_0^{\ll_j} C_1^{\ll_j -1}\right)^{\kk_j}}{\kk_j!}
\label{eq:SQG:h:9}
\end{align}
Therefore, once we notice that $|\alp| + 1 \leq 2^{|\alp|}$, if we let
\begin{align}
C_1 \geq 27 \lambda C_K,
\label{eq:C1:2}
\end{align}
from \eqref{eq:SQG:h:9} and Lemma~\ref{lem:multi:magic} we deduce that
\begin{align}
&\int_0^1 | \partial_t^m \nabla \KK_{in}( \rho \XX(\aa,t) + (1-\rho) \XX(\bb,t) - \XX(\cc,t) )| d\rho \notag\\ 
&\quad  \leq 2 C_k 12^3 m! (m+1) (-1)^m {1/2 \choose m+1} C_0^m C_1^m \frac{e^{-|\cc-(\aa+\bb)/2|^2/(288)}}{|\cc-(\aa+\bb)/2|^3}
\label{eq:SQG:h:10}.
\end{align}
Using \eqref{eq:SQG:g:2}, \eqref{eq:SQG:g:3}, \eqref{eq:SQG:g:6},  Lemma~\ref{lem:combinatorial:new},  and \eqref{eq:SQG:h:10}, we arrive at
\begin{align}
L_{33}
&\leq C_0^{n}C_1^{n} (-1)^n (n+1)! {1/2 \choose n+1} |\aa-\bb| I_{3,out}
\label{eq:SQG:h:11}
\end{align}
where
\begin{align}
I_{3,out} 
&= 2 \lambda C_K 12^3 C_1^{-2} \| \nabla \theta_0\|_{C^\gamma} \int_{|\cc - (\aa+\bb)/2|\geq 4 |\aa-\bb|} {\frac{ |\aa-\cc|^\gamma + |\bb-\cc|^\gamma}{2} } \frac{e^{-|\cc-(\aa+\bb)/2|^2/(288)}}{|\cc-(\aa+\bb)/2|^3} d\cc \notag\\
&\leq 144 \| \nabla \theta_0\|_{C^\gamma} \int_{|\cc - (\aa+\bb)/2|\geq 4 |\aa-\bb|} |\cc-(\aa+\bb)/2|^{\gamma-3}  d\cc \notag\\
&\leq 288 \pi /(1-\gamma)  \| \nabla \theta_0\|_{C^\gamma} (4 |\aa-\bb|)^{\gamma-1}\notag \\
&\leq \frac{1}{16} C_0 |\aa-\bb|^{\gamma-1} 
\label{eq:C0:5}
\end{align}
if we choose $C_0$ sufficiently large. 
From \eqref{eq:SQG:h:11} and \eqref{eq:C0:5} we conclude that 
\begin{align}
L_{33} \leq  \frac{1}{16} C_0^{n+1}C_1^{n} (-1)^n (n+1)! {1/2 \choose n+1} |\aa-\bb|^\gamma
\label{eq:SQG:h:88}.
\end{align}

{In order to estimate $L_{34}$ we need to appeal to one more cancellation property: each component of the kernel $\KK$ is a derivative of a non-singular scalar kernel, i.e.}
\[
{\KK(\yy) = \frac{\yy^\perp}{2\pi |\yy|^3} = \nabla^\perp_{y} \left( \frac{-1}{2\pi |\yy|} \right).}
\]
{This is in fact the reason why $\KK$ has zero mean on spheres.} {The kernels associated to each of the hydrodynamic systems considered in this paper obey this property.} {The upshot of the above identity is that we have}
\begin{align}
{\KK_{in}(\yy) = \nabla_y^\perp \Big(K_{in}^{(1)}(\yy)\Big) + \KK_{in}^{(2)}(\yy)}  = \{ K_{in}^{(1)}(\yy),\yy \} + \KK_{in}^{(2)}(\yy)
\label{eq:Kin:decomposition}
\end{align}
where $\{\cdot,\cdot\}$ denotes the Poisson bracket, and 
\[
K_{in}^{(1)}(\yy) = \frac{-1}{2\pi |\yy|} e^{-|\yy|^2}
\]
and 
\[
\KK_{in}^{(2)}(\yy) = \frac{-\yy^\perp}{\pi |\yy|} e^{-|\yy|^2}.
\]
Similarly to \eqref{eq:K:bound}, there exits $C_K>0$ such that 
\begin{align}
| \partial^{\alp} K_{in}^{(1)}(\yy)| \leq  \frac{C_K^{|\alp|} |\alp|!}{|\yy|^{|\alp|+1}} e^{-|\yy|^2/2}
\quad \mbox{and} \quad
| \partial^{\alp} \KK_{in}^{(2)}(\yy)| \leq \frac{C_K^{|\alp|} |\alp|!}{|\yy|^{|\alp|}} e^{-|\yy|^2/2}
\label{eq:K:bound:2}
\end{align}
holds for any multi-index $\alp$ and any $\yy\neq 0$. 

The importance of the cancellation property hidden in  \eqref{eq:Kin:decomposition} is seen as follows. When bounding the term $L_{34}$ we need to estimate
\begin{align*} 
T_m(\aa) := \int_{|\cc - \frac{\aa+\bb}{2}| \geq 4 |\aa-\bb|}  \partial_t^m  \KK_{in}( \XX(\aa,t) - \XX(\cc,t))  d\cc,
\end{align*}
and a similarly defined $T_m(\bb)$.
Due to \eqref{eq:Kin:decomposition}, and the change of variables
\begin{align*} 
&(\nabla^\perp_j K_{in}^{(1)}) (\XX(\aa,t)-\XX(\cc,t)) \notag\\
&= - \frac{\partial X_j}{\partial c_2}(\cc,t) \frac{\partial}{\partial c_1} K_{in}^{(1)}(\XX(\aa,t)-\XX(\cc,t)) + \frac{\partial X_j}{\partial c_1}(\cc,t) \frac{\partial}{\partial c_2} K_{in}^{(1)}(\XX(\aa,t)-\XX(\cc,t)) \notag\\
&= - \{ K_{in}^{(1)}(\XX(\aa,t)-\XX(\cc,t)) ,   X_j(\cc,t) \}
\end{align*}
which holds due to the Poisson bracket being invariant under composition with a divergence-free $\XX$,
we rewrite
\begin{align*} 
T_m(\aa) 
&= \int_{|\cc - \frac{\aa+\bb}{2}| \geq 4 |\aa-\bb|}  \partial_t^m  \KK_{in}^{(2)}( \XX(\aa,t) - \XX(\cc,t))  d\cc  \notag\\
&\qquad - \int_{|\cc - \frac{\aa+\bb}{2}| \geq 4 |\aa-\bb|}  \partial_t^m \Big\{ K_{in}^{(1)}(\XX(\aa,t)-\XX(\cc,t)) ,  \XX(\cc,t) \Big\}  d\cc \notag\\
&= \int_{|\cc - \frac{\aa+\bb}{2}| \geq 4 |\aa-\bb|}  \partial_t^m  \KK_{in}^{(2)}( \XX(\aa,t) - \XX(\cc,t))  d\cc  \notag\\
&\qquad - \sum_{i=0}^m {m \choose i} \int_{|\cc - \frac{\aa+\bb}{2}| \geq 4 |\aa-\bb|}   \Big\{ \partial_t^{i} K_{in}^{(1)}(\XX(\aa,t)-\XX(\cc,t)) , \partial_t^{m-i} \XX(\cc,t) \Big\}  d\cc.
\end{align*}
In the second term in the above, we integrate by parts in the $\cc$ variable (the variable in which the derivatives in the Poisson bracket are taken) and note that $\cc$-derivatives commute with $t$-derivatives, to obtain
\begin{align}
T_m(\aa)&= \int_{|\cc - \frac{\aa+\bb}{2}| \geq 4 |\aa-\bb|}  \partial_t^m  \KK_{in}^{(2)}( \XX(\aa,t) - \XX(\cc,t))  d\cc  \notag\\
&\quad - \sum_{i=0}^m {m \choose i} \int_{|\cc - \frac{\aa+\bb}{2}| = 4 |\aa-\bb|} \partial_t^{i} K_{in}^{(1)}(\XX(\aa,t)-\XX(\cc,t)) \,  {\boldsymbol{n}}_c^\perp \cdot  \partial_t^{m-i} (\nabla_c\XX(\cc,t) )  d\sigma(\cc)
\label{eq:Ta:1}
\end{align}
where ${\boldsymbol{n}}$ is the outward unit normal to the circle $\{ \cc \colon |\cc - \frac{\aa+\bb}{2}| = 4 |\aa-\bb| \}$. The corresponding formula also holds for $T_m(\bb)$.
Using \eqref{eq:K:bound:2} and the argument used to prove \eqref{eq:SQG:g:5:01}, it follows that 
\begin{align} 
| \partial_t^i K_{in}^{(1)} (\XX(\aa,t) - \XX(\cc,t)) | \leq (-1)^i i! C_0^i C_1^i \frac{e^{-|\aa-\cc|^2/(2\lambda^2)}}{|\aa-\cc|} \lambda^2 2 (i+1) \comb{i+1}
\label{eq:Kin:1:bound}
\end{align}
for all $i\geq 0$
and
\begin{align} 
| \partial_t^m \KK_{in}^{(2)} (\XX(\aa,t) - \XX(\cc,t)) | \leq (-1)^m m! C_0^m C_1^m e^{-|\aa-\cc|^2/(2\lambda^2)}  \lambda^2 2 (m+1) \comb{m+1}
\label{eq:Kin:2:bound}
\end{align}
for all $m\geq 0$.
Therefore, using \eqref{eq:SQG:g:2} and \eqref{eq:K:bound:2}--\eqref{eq:Kin:2:bound} we conclude that 
\begin{align} 
|T_m(\aa)| 
&\leq (-1)^m m! C_0^m C_1^m \lambda^2 2(m+1) \comb{m+1} \int_{|\cc - \frac{\aa+\bb}{2}| \geq 4 |\aa-\bb|} e^{-|\aa-\cc|^2/(2\lambda^2)} d\cc\\
&\qquad + \sum_{i=0}^m {m \choose i} (-1)^i i! C_0^i C_1^i \lambda^2 2 (i+1) \comb{i+1} 
(-1)^{m-i-1} (m-i)! \comb{m-i} C_0^{m-i} C_1^{m-i-1} \notag\\
&\qquad \qquad \qquad \times \int_{|\cc - \frac{\aa+\bb}{2}| = 4 |\aa-\bb|} \frac{e^{-|\aa-\cc|^2/(2\lambda^2)}}{|\aa-\cc|}  d\sigma(\cc) \notag\\
&\leq \frac{1}{128} (-1)^m m! C_0^{m+1} C_1^m   2(m+1) \comb{m+1}
\label{eq:Ta:2}
\end{align}
by choosing $C_0$ sufficiently large, depending on $\lambda$. Here we have used that $|\aa-\cc|\geq 3 |\aa-\bb|$ and the combinatorial identity
\begin{align*} 
\sum_{i=0}^m 2(i+1) (-1)^i \comb{i+1} (-1)^{m-i-1} \comb{m-i} = 4 (-1)^{m} (m+1) \comb{m+1}
\end{align*}
which is proven using the argument given in Section~\ref{sec:proof:combinatorial:lemma}.
To conclude the $T_{34}$ bound, we combine \eqref{eq:Ta:2} and the corresponding estimate for the $\bb$ term, with \eqref{eq:SQG:g:2}, \eqref{eq:SQG:g:3}, and Lemma~\ref{lem:combinatorial:new} to obtain
\[
L_{34}\leq  \frac{1}{16} C_0^{n+1}C_1^{n} (-1)^n (n+1)! {1/2 \choose n+1} |\aa-\bb|^\gamma
\]
for all $n\geq 0$. 

Thus, from \eqref{eq:SQG:h:8}, \eqref{eq:SQG:h:88}, and  the above estimate for $L_{34}$, we obtain the desired bound for $L_3$, namely
\begin{align}
L_{3}
&\leq \frac 14 C_0^{n+1}C_1^{n} (-1)^n (n+1)! {1/2 \choose n+1} |\aa-\bb|^\gamma.
\label{eq:SQG:h:12}
\end{align}

It is left to estimate $L_4$, as defined in \eqref{eq:SQG:h:4}, which is achieved similarly to $L_3$. First we decompose
\begin{align*}
L_4 = L_{41} + L_{42} + L_{43},
\end{align*}
where
\begin{align*}
L_{41}&= \sum_{0\le m\le r \le n} {n \choose r} {r \choose m}
\partial^{n-r}_t\nabla  \XX(\bb,t) \notag\\
&\  \times \int_{|\cc - \frac{\aa+\bb}{2}| \leq 4 |\aa-\bb|}  \partial^m_t \KK_{out}(\XX(\aa,t)-\XX(\cc,t))    \partial_t^{r-m} (\nabla^\perp \XX^\perp(\cc,t)) \nabla \theta_0(\cc)   d\cc\\
L_{42}&= - \sum_{0\le m\le r \le n} {n \choose r} {r \choose m}
\partial^{n-r}_t\nabla  \XX(\bb,t) \notag\\
&\  \times \int_{|\cc - \frac{\aa+\bb}{2}| \leq 4 |\aa-\bb|}  \partial^m_t \KK_{out}(\XX(\bb,t)-\XX(\cc,t))    \partial_t^{r-m} (\nabla^\perp \XX^\perp(\cc,t) ) \nabla \theta_0(\cc)   d\cc
\end{align*}
and
\begin{align*}
L_{43}&= \sum_{0\le m\le r \le n} {n \choose r} {r \choose m}
\partial^{n-r}_t\nabla  \XX(\bb,t) (\XX(\aa,t) - \XX(\bb,t)) \notag\\
&\qquad \times \int_{|\cc - \frac{\aa+\bb}{2}| \geq 4 |\aa-\bb|} \int_0^1 \partial_t^m \nabla \KK_{out}( \rho \XX(\aa,t) + (1-\rho) \XX(\bb,t) - \XX(\cc,t)) d\rho \notag \\
&\qquad  \qquad \qquad \qquad\times \partial_t^{r-m} (\nabla^\perp \XX^\perp(\cc,t) ) \nabla \theta_0(\cc) d\cc
\end{align*}
We appeal to \eqref{eq:SQG:g:2}, \eqref{eq:SQG:g:4}, \eqref{eq:SQG:g:5:11},  and Lemma~\ref{lem:combinatorial:new} to obtain
\begin{align}
L_{41} + L_{42} 
&\leq  C_0^{n}C_1^{n} (-1)^n (n+1)! {1/2 \choose n+1} I_{4,in} 
\label{eq:SQG:h:13}
\end{align}
under the standing assumptions on $C_0$ and $C_1$, where
\begin{align*}
I_{4,in} 
&= \int_{|\cc - \frac{\aa+\bb}{2}| \leq 4 |\aa-\bb|}  |\nabla \theta_0 (\cc)| d\cc \leq (16 \pi |\aa-\bb|^2)^{\gamma/2} \|\nabla \theta_0\|_{L^{2/(2-\gamma)}} \leq C_0 |\aa-\bb|^\gamma
\end{align*}
by letting 
\begin{align}
C_0 \geq 8 (16 \pi)^{\gamma/2}\left( \|\nabla \theta_0\|_{L^1} +  \|\nabla \theta_0\|_{L^\infty} \right)
\label{eq:C0:6}.
\end{align}
From \eqref{eq:SQG:h:13} and \eqref{eq:C0:6} we obtain the desired bound
\begin{align}
L_{41} + L_{42} 
&\leq  \frac 18 C_0^{n+1}C_1^{n} (-1)^n (n+1)! {1/2 \choose n+1} |\aa-\bb|^\gamma.
\label{eq:SQG:h:14}
\end{align}
Estimating $L_{43}$ is similar to bounding $L_{33}$. First, note that similarly to \eqref{eq:SQG:h:10}, under the standing assumptions on $C_0$ and $C_1$ we have
\begin{align}
&\int_0^1 \partial_t^m \nabla \KK_{in}( \rho \XX(\aa,t) + (1-\rho) \XX(\bb,t) - \XX(\cc,t)) d\rho  \notag\\
&\qquad \leq 24 C_k m! (m+1) (-1)^m {1/2 \choose m+1} C_0^m C_1^m \frac{1}{|\cc-(\aa+\bb)/2|}
\label{eq:SQG:h:14b}
\end{align}
for $|\cc - (\aa+\bb)/2| \geq 4 |\aa-\bb|$. Combining \eqref{eq:SQG:g:2}, \eqref{eq:SQG:g:4},  Lemma~\ref{lem:combinatorial:new}, and \eqref{eq:SQG:h:14b} we obtain
\begin{align}
L_{43} \leq C_0^{n}C_1^{n} (-1)^n (n+1)! {1/2 \choose n+1}  |\aa-\bb|  I_{4,out}
\label{eq:SQG:h:15a}
\end{align}
where
\begin{align}
I_{4,out}
&= 24 \lambda C_k C_1^{-2} \int_{|\cc - \frac{\aa+\bb}{2}| \geq 4 |\aa-\bb|} \frac{|\nabla \theta_0 (\cc)|}{|\cc-(\aa+\bb)/2|}  d\cc  \notag\\
&\leq C_\gamma \|\nabla \theta_0\|_{L^{2/(2-\gamma)}} |\aa-\bb|^{\gamma-1} 
\leq \frac 18 C_0  |\aa-\bb|^{\gamma-1} \label{eq:C0:7}
\end{align}
by choosing $C_0$ sufficiently large. Finally, from \eqref{eq:SQG:h:14}--\eqref{eq:C0:7} we obtain that 
\begin{align}
L_4 \leq \frac 14 C_0^{n+1}C_1^{n} (-1)^n (n+1)! {1/2 \choose n+1} |\aa-\bb|^\gamma
\label{eq:SQG:h:15}.
\end{align}
The bounds \eqref{eq:SQG:h:5}, \eqref{eq:SQG:h:12}, and \eqref{eq:SQG:h:15} combined show that 
\begin{align*}
[ \nabla \XX(\cdot,t) ]_{C^\gamma} \leq C_0^{n+1}C_1^{n} (-1)^n (n+1)! {1/2 \choose n+1}
\end{align*}
for $0\leq t \leq T$, which concludes the proof of the H\"older estimate for $\nabla \XX$.

\subsection{Proof of the Lemma~\ref{lem:combinatorial:new}}
\label{sec:proof:combinatorial:lemma}
\begin{proof}[Proof of  identity \eqref{eq:combinatorial:new}]
In order to prove Lemma~4.2, we need to compute
\begin{align}
S_n &= \sum_{r=0}^n \sum_{m=0}^r 2 (m+1) (-1)^m \comb{m+1} (-1)^{r-m-1} \comb{r-m} (-1)^{n-r-1} \comb{n-r}\notag\\
&= \sum_{r=0}^n \sum_{m=0}^r a_m b_{r-m} b_{n-r}
\label{eq:S}
\end{align}
where $n\geq 1$, and we have defined the coefficients 
\begin{align}
a_m =  2 (m+1) (-1)^m \comb{m+1}, \qquad b_m =   (-1)^{m-1} \comb{m} 
\end{align}
for all 
$m\geq 0$. Note that both $a_m$ and $b_m$ are non-negative, and thus it is clear that $S_n\geq0$ for all $n\geq 1$. 

We now find the generating function for the coefficients $a_m$ and $b_m$.
We recall the following generalization of Newton's Binomial formula: for $\alpha \in \RR$ and $-1<t<1$, we have
\begin{align}
(1-t)^{\alpha} = 1 + \sum_{j=1}^{\infty} {\alpha \choose j} (-t)^j.
\end{align}
In particular, we have that 
\begin{align}
(1-t)^{1/2} &=  1 - \sum_{j=1}^\infty (-1)^{j-1} \comb{j} t^j = 2 - \sum_{j=0}^\infty (-1)^{j-1} \comb{j} t^j = 2 - \sum_{j=0}^\infty b_j t^j 
\label{eq:binomial:1}
\end{align}
Formally differentiating the identity \eqref{eq:binomial:1} we arrive at
\begin{align}
 \frac 12 (1-t)^{-1/2} = \sum_{j=1}^\infty j (-1)^{j-1} \comb{j} t^{j-1} = \sum_{n=0}^\infty  (n+1) (-1)^{n} \comb{n+1} t^n
\end{align}
and therefore
\begin{align}
(1-t)^{-1/2} = \sum_{j=0}^\infty a_j t^j. 
\label{eq:binomial:2}
\end{align}
Multiplying the power series formally, we now have that 
\begin{align}
 \sum_{n\geq 0} t^n \left( \sum_{r=0}^{n} \sum_{m=0}^r a_m b_{r-m} b_{n-r} \right) 
 &= \left(\sum_{j\geq 0} a_j t^j \right) \left( \sum_{j\geq 0} b_j t^j \right)^2 \notag\\
 &= (1-t)^{-1/2} \left(2- (1-t)^{1/2}\right)^2  \notag\\
 %&= 4 (1-t)^{-1/2} - 4 + (1-t)^{1/2} \notag\\
 &= 4 (1-t)^{-1/2} - 2 - \left( 2 - (1-t)^{1/2} \right) \notag\\
 %&= 4 \left(\sum_{n\geq 0} a_n t^n \right) - 2 - \left(\sum_{n\geq 0} b_n t^n \right) \notag\\
 &= -2 + \sum_{n\geq 0} t^n \left( 4 a_n - b_n \right).
\end{align}
Equating powers of $t^n$, we thus obtain from the above that
\begin{align}
S_n = 4 a_n - b_n 
&= 8 (n+1) (-1)^n \comb{n+1} - (-1)^{n-1} \comb{n}\notag\\
&= \left(8  - \frac{2}{2n-1} \right) (n+1) (-1)^{n} \comb{n+1}  = \frac{16 n - 10}{2n-1}  (n+1) (-1)^{n} \comb{n+1} 
\label{eq:Sn:exact}
\end{align}
for all $n\geq 1$.

As a consequence, we obtain that 
\begin{align}
S_n = (n+1) (-1)^n \comb{n+1} \frac{16 n - 10}{2n-1} \leq 8 (n+1) (-1)^n \comb{n+1}
\end{align}
which completes the proof.
\end{proof}

\subsection{Proof of  estimate \texorpdfstring{\eqref{eq:K:bound}}{for the kernel}}
\label{app:kernel:bound}

The claim is that exists a universal constant $C_K \geq 1$ such that
\begin{align}
| \partial^{\alp} \KK_{in}(\yy)| \leq  \frac{C_K^{|\alp|} |\alp|!}{|\yy|^{|\alp|+2}} e^{-|\yy|^2/2}
\quad \mbox{and} \quad
| \partial^{\alp} \KK_{out}(\yy)| \leq \frac{C_K^{|\alp|} |\alp|!}{|\yy|^{|\alp|}}
\label{eq:K:bound:app}
\end{align}
holds for any multi-index $\alp$ and any $\yy\neq 0$. We shall give here the proof of the inner kernel $\KK_{in}$, since the proof for the outer kernel $\KK_{out}$ follows similarly, in view of the fact that $(1- e^{-|\yy|^2}) |\yy|^{-2} = {\mathcal O}(1)$ as $|\yy| \to 0$.

From the Leibniz rule we have
\[
\partial^{\alp} \left(\frac{\yy^\perp}{|\yy|^3} e^{-|\yy|^2}\right) = \sum_{\bet+\gam =\alp}
{\alp \choose \bet} \partial^{\bet} \left(\frac{\yy^\perp}{|\yy|^3}\right)
\partial^{\gam} (e^{-|\yy|^2})
\]
It is easy to check that the number of terms in $\partial^{\bet} \left(\frac{\yy^\perp}{|\yy|^3}\right)$
is at most $2^{|\bet|}$, and that the coefficient of each one of these terms is bounded from above by $(2|\bet|+1)!!$. Therefore, we obtain
\[
 \left|\partial^{\bet} \left(\frac{\yy^\perp}{|\yy|^3}\right)\right| \le 2^{|\bet|}\,(2|\bet|+1)!! \frac{1}{|\yy|^{|\beta|+2}}.
\]
The total number of terms in $\partial^{\gam} (e^{-|\yy|^2})$ is at most $2^{|\gam|-1}$ and the coefficient
of each term is bounded by $2^{|\gam|}$. Therefore,
\[
|\partial^{\gam} (e^{-|\yy|^2})| \le 2^{2|\gam|-1}\, e^{-|\yy|^2} \max\{1,|\yy|^{|\gam|}\}
\]
Therefore, it follows that
\begin{align*}
\left|\partial^{\alp} \left(\frac{\yy^\perp}{|\yy|^3} e^{-|\yy|^2}\right)\right| 
\leq& \sum_{\bet+\gam =\alp}
{\alp \choose \bet} 2^{|\bet|}\,(2|\bet|+1)!! \frac{1}{|\yy|^{|\beta|+2}}\,2^{2|\gam|-1}\, e^{-|\yy|^2} \max\{1,|\yy|^{|\gam|}\}\\
\leq& \frac{e^{-|\yy|^2/2} }{|\yy|^{|\alp|+2}} \,\sum_{\bet+\gam =\alp}
{\alp \choose \bet} 2^{|\bet|}\,(2|\bet|+1)!! \,2^{2|\gam|-1}\,
e^{-|\yy|^2/2}\,|\yy|^{|\gam|}\,\max\{1,|\yy|^{|\gam|}\}.
\end{align*}
Now for any $\yy\neq 0$, we have the bound
\[
e^{-|\yy|^2/2}\,|\yy|^{|\gam|}\,\max\{1,|\yy|^{|\gam|}\} \le (2|\gam|/e)^{|\gam|}.
\]
and using Stirling's formula
\[
n! \approx \sqrt{2\pi n}\, (n/e)^n, \qquad \sqrt{2\pi n}\, (n/e)^n \leq n!
\]
we arrive at
\[
e^{-|\yy|^2/2}\,|\yy|^{|\gam|}\,\max\{1,|\yy|^{|\gam|}\} \le \frac{2^{|\gam|}}{\sqrt{2\pi|\gam|}}\, 
|\gam|!
\]
Therefore, 
\begin{align*}
\left|\partial^{\alp} \left(\frac{\yy^\perp}{|\yy|^3} e^{-|\yy|^2}\right)\right| 
\leq& \frac{1}{|\yy|^{|\alp|+2}} e^{-|\yy|^2/2} \,\sum_{\bet+\gam =\alp}
{\alp \choose \bet} 2^{|\bet|}\,(2|\bet|+1)!! \,2^{2|\gam|-1}\, \frac{2^{|\gam|}}{\sqrt{2\pi|\gam|}}\,
|\gam|!\\
\le & \frac{2^{4|\alp|}\, |\alp|!}{|\yy|^{|\alp|+2}}\, e^{-|\yy|^2/2}\,
\sum_{\bet+\gam =\alp} \frac{\alp!}{\bet! \gam!}\, \frac{|\bet|!\, |\gam|!}{|\alp|!}
\end{align*}
where we have used 
\[
2^{|\bet|}\,(2|\bet|+1)!! \,2^{2|\gam|-1} \le 2^{2|\alp|}\,(|\bet|+1)! \le 2^{2|\alp|}\,(|\alp|+1)!
\le 2^{3|\alp|}\,|\alp|!
\]
%$$
%\sum_{\bet+\gam =\alp}
%\left(\begin{array}{c}\alp\\ \bet \end{array}\right) =2^{|\alp|}
%$$
Since $ |\bet|!\, |\gam|! \leq |\alp|! $, the rough estimate 
\[
\sum_{\bet+\gam =\alp} \frac{\alp!}{\bet! \gam!}\, \frac{|\bet|!\, |\gam|!}{|\alp|!} \le \sum_{\bet+\gam =\alp} \frac{\alp!}{\bet! \gam!} = 2^{|\alp|}
\]
holds.
In summary, we have shown that,
\begin{align*}
\left|\partial^{\alp} \left(\frac{\yy^\perp}{|\yy|^3} e^{-|\yy|^2}\right)\right| \le \frac{2^{5|\alp|}\, |\alp|!}{|\yy|^{|\alp|+2}}\, e^{-|\yy|^2/2}.
\end{align*}
The constant $C_K$ in \eqref{eq:K:bound} is thus less than $2^5$.

\appendix

\section{Derivation of Lagrangian formulae}
\label{sec:Lagrangian:derivation}

In this Appendix we provide the derivation of the self-contained formulae for $d\XX/dt$ and $d\nabla \XX/dt$ stated in Section~\ref{sec:examples}.
Let $\AA$ denote back-to-labels map, which is the inverse particle trajectory map, i.e.
\begin{align*}
\AA(\XX(\aa,t),t) =\aa.
\end{align*}
We will frequently use that 
\begin{align*}
(\nabla_x \AA) (\XX(\aa,t),t) (\nabla_a \XX)(\aa,t) =\II  
\end{align*}
or equivalently
\begin{align}
(\nabla_x \AA)(\XX(\aa,t),t) =((\nabla_a \XX)(\aa,t))^{-1} = (\nabla_a \XX)^\perp(\aa,t).
\label{eq:gradA:gradX}
\end{align}
Coordinate-wise the above identity is equivalent to
\begin{align*}
&\frac{\partial A_1}{\partial {x_1}}(\XX(\aa,t),t) = \frac{\partial X_2}{\partial a_2}(\aa,t),\qquad
\frac{\partial A_2}{\partial {x_1}}(\XX(\aa,t),t) = -\frac{\partial X_2}{\partial a_1}(\aa,t),\\
&\frac{\partial A_1}{\partial {x_2}}(\XX(\aa,t),t) = -\frac{\partial X_1}{\partial a_2} (\aa,t),\qquad
\frac{\partial A_2}{\partial {x_2}}(\XX(\aa,t),t) = \frac{\partial X_1}{ \partial a_1} (\aa,t).
\end{align*}
The upshot of the above formulae is that if we define
\begin{align*} 
\theta_0(\AA(\xx,t)) = \theta(\xx,t)
\end{align*}
then we have
\begin{align} 
\partial_{x_k} \theta(\xx,t) = \frac{\partial \theta_0}{\partial {a_j}}(\AA(\xx,t)) \frac{\partial A_j}{\partial {x_k}}(\xx,t) = \frac{\partial \theta_0}{\partial a_j}(\aa) \frac{\partial X_k^\perp}{\partial a_j^\perp}(\aa,t) 
\label{eq:gradx:gradX}
\end{align}
where in the last equality we have used \eqref{eq:gradA:gradX}.

\subsection{2D SQG}
\label{sec:app:SQG}
The constitutive law of SQG yields 
\[
\uu(\xx) = {\mathcal R}^\perp \theta(\xx) = \int \frac{(\xx-\yy)^\perp}{2\pi |\xx-\yy|^3} \theta(\yy) d\yy = \int \KK(\xx-\yy) \theta(\yy) d\yy
\] 
and the evolution gives
\[
\theta(\XX(\bb,t),t) = \theta_0(\bb)
\]
Combining the above we arrive at
\begin{align*} 
\frac{d\XX}{dt} (\aa,t) 
=   \int \KK(\XX(\aa,t) - \yy)  \theta(\yy,t) d\yy 
=   \int \KK(\XX(\aa,t) - \XX(\bb,t))  \theta_0(\bb) d\bb
\end{align*}
since by incompressibility the determinant of the Jacobian is equal to $1$. To derive the formula for $d (\nabla \XX)/dt$, 
we switch back to Eulerian coordinates where
\[
\partial_{x_k} u_i(\xx) = \int \KK(\xx-\yy) \partial_{y_k} \theta(\yy,t) d\yy
\]
and then appeal to \eqref{eq:gradx:gradX} in order to obtain
\begin{align*} 
\frac{d}{dt} \frac{\partial X_i}{\partial a_j} (\aa,t) = \frac{\partial X_k}{\partial a_j} \int K_i(\XX(\aa,t)-\XX(\bb,t)) \frac{\partial \theta_0}{\partial b_j}(\bb)  \frac{\partial A_j}{\partial y_k} (\XX(\bb,t),t) d\bb.
\end{align*}
Using \eqref{eq:gradA:gradX} we arrive at
\begin{align*} 
\frac{d(\nabla_a \XX)}{dt} (\aa,t) = \nabla_a \XX(\aa,t) \int \KK(\XX(\aa,t)-\XX(\bb,t)) (\nabla_b^\perp \XX^\perp) (\bb,t) {(\nabla_b \theta_0)}(\bb) d\bb.
\end{align*}
which proves \eqref{eq:SQG:nabla:X:def}.

\subsection{2D IPM}
\label{sec:app:IPM}
In Eulerian coordinates the scalar vorticity $\omega$ satisfies
\begin{align*}
\omega = \nabla^\perp \cdot \uu = -\partial_{x_1}\theta.
\end{align*}
Therefore, along particle trajectories we have
\begin{align*}
\omega(\XX(\aa,t),t) = -(\partial_{x_1}\theta)(\XX(\aa,t),t) =-\left\{\theta_0(\aa), X_2(\aa,t)\right\}.
\end{align*}
Therefore, since the kernel of the two dimensional Biot-Savart law in Eulerian coordinates is given by
\[
\uu(\xx) = \frac{1}{2\pi} \int \frac{(\xx-\yy)^\perp}{|\xx-\yy|^2} \omega(\yy) d\yy,
\]
upon letting $\yy = \XX(\bb,t)$ we obtain
\begin{align*}
\frac{d \XX}{dt}(\aa,t) 
&=  \frac1{2\pi} \int \frac{(\XX(\aa,t)-\XX(\bb,t))^\perp}{|(\XX(\aa,t)-\XX(\bb,t)|^2}\,\omega(\XX(\bb,t),t)\,d\bb\\
&= -\frac1{2\pi} \int \frac{(\XX(\aa,t)-\XX(\bb,t))^\perp}{|(\XX(\aa,t)-\XX(\bb,t)|^2}\,\left\{\theta_0(\bb), X_2(\bb,t) \right\}\,d\bb.
\end{align*}
To derive the formula for $\partial_t \nabla X$, we  differentiate the kernel and arrive at
\begin{align}
\frac{d(\nabla_a \XX)}{dt} (\aa,t) 
&=- \nabla_a \XX(\aa,t)  \int \KK(\XX(\aa,t)-\XX(\bb,t))\,   \left\{\theta_0(\bb), X_2(\bb,t)\right\}  \,d\bb  \notag \\
&\qquad + \frac12\left\{\theta_0(\aa), X_2(\aa,t)\right\}\,\left[\begin{array}{cc} 0  & -1\\ 1 & 0\end{array}  \right]\nabla_a \XX(\aa,t)
\label{eq:IPM:nablaX}
\end{align}
where $\KK$ is the same as in \eqref{kernelK}, namely
\begin{align}
\KK(\yy)= \KK(y_1, y_2) = \frac1{2\pi |\yy|^4} \left[\begin{array}{cc} 2y_1 \,y_2 & y_2^2-y_1^2\\ y_2^2-y_1^2 & -2y_1 \,y_2\end{array}  \right].
\label{eq:kernelK}
\end{align}

\subsection{3D Euler}
\label{sec:app:3DE}

From the Biot-Savart in three dimensions 
\[
\uu(\xx,t) = \fr{1}{4\pi}\int_{\RR^3}\frac{\xx-\yy}{|\xx-\yy|^3}\times \om(\yy,t)d\yy.
\]
composition with the Lagrangian path $\yy = \XX(\bb,t)$, and the Cauchy formula
\[
\om (\XX(\aa,t), t) = \na\XX(\aa,t)\om_0(\aa)
\]
we arrive at a self-contained formula for the evolution of $\XX(\aa,t)$
\[
\frac{d\XX}{dt}(\aa,t) = \frac{1}{4\pi} \int \frac{\XX(\aa,t)-\XX(\bb,t)}{|\XX(\aa,t)-\XX(\bb,t)|^3} \times ( \nabla_b \XX(\bb,t) \om_0(\bb))d\bb.
\]
The evolution equation for $\nabla \XX$ is obtained by first switching to Eulerian coordinates, which allows us to compute $\nabla_x \uu$ from $\om$ via Calder\'on-Zygmund singular integrals. 
For this purpose one considers the rate of strain matrix
\[
S_{ij} = \fr{1}{2}\left (\pa_i u_j + \pa_j u_i\right)
\]
and uses the Biot-Savart law to compute
\begin{align*}
S_{ij} &= \frac{3}{8\pi} \int_{\RR^3}  \frac{\left( (\xx-\yy)\times \om(\yy)\right)_i (\xx-\yy)_j + \left( (\xx-\yy)\times \om(\yy)\right)_j (\xx-\yy)_i}{|\xx-\yy|^5} d\yy \notag \\
&=: \int ( \KK(\xx-\yy) \om(\yy))_{ij} dy
\end{align*}
where we have defined
\begin{align}
( \KK(\xx) \yy)_{ij} = \frac{3}{8\pi} \frac{\left( \xx \times \yy \right)_i x_j + \left( \xx \times \yy\right)_j x_i}{|\xx|^5}.
\la{so}
\end{align}
Of course, the full gradient is then obtain using
\[
(\na \uu)\boldsymbol{v} = \boldsymbol{S} \boldsymbol{v} + \fr{1}{2}\om  \times \boldsymbol{v}.
\] 
To obtain the evolution of $\nabla \XX$ we then compute
\begin{align*}
\frac{d}{dt} \frac{\partial X_i}{\partial a_j}(\aa,t) 
&= \frac{\partial u_i}{\partial x_k}(\XX(\aa,t),t) \frac{\partial X_k}{\partial a_j}(\aa,t) \\
&= S_{ik}(\XX(\aa,t),t) \frac{\partial X_k}{\partial a_j}(\aa,t) + \frac 12 ( \om(\XX(\aa,t),t) \times (\nabla_{a_j} \XX)(\aa,t) )_{i}\\
&= \int \left[ \KK(\XX(\aa,t)-\XX(\bb,t)) \left( \nabla_b \XX(\bb,t) \om_0(\bb) \right) \right]_{ik} d\yy \frac{\partial X_k}{\partial a_j}(\aa,t) \notag\\ 
&\qquad + \frac 12 \left( (\nabla_a\XX(\aa,t)\om_0(\aa)) \times (\nabla_{a_j} \XX)(\aa,t) \right)_{i}
\end{align*}
where we have used the notation in \eqref{so} for the $ik$-component of $\KK(\cdot) (\nabla_a \XX \om_0)$.

\subsection{2D Euler}
\label{sec:app:2DE}
From the Lagrangian conservation 
\[
\omega(\XX(\aa,t),t)= \omega_0(\aa)
\]
and the Eulerian two dimensional Biot-Savart law~\cite{MajdaBertozzi02}
we directly arrive at
\begin{align*} 
\frac{d\XX}{dt}(\aa,t) = \frac{1}{2\pi} \int \frac{(\XX(\aa,t)-\XX(\bb,t))^\perp}{|\XX(\aa,t)-\XX(\bb,t)|^2} \omega_0(\bb) d\bb.
\end{align*}
Estimates for the time derivative of $\nabla_a \XX$ are obtained from the above by differentiating the kernel, similarly to \eqref{eq:IPM:nablaX}. We obtain
\begin{align*} 
\frac{d(\nabla_a \XX)}{dt}(\aa,t)
&= \nabla_a \XX(\aa,t)  \int \KK(\XX(\aa,t)-\XX(\bb,t))\, \omega_0(\bb) \,d\bb  + \frac12 \omega_0(\aa) \,\left[\begin{array}{cc} 0  & -1\\ 1 & 0\end{array}  \right]\nabla_a \XX(\aa,t)
\end{align*}
where the kernel $\KK$ is given in \eqref{eq:kernelK}.

\subsection{2D Boussinesq}
\label{sec:app:Boussinesq}

Along the particle trajectory $\xx=\XX(\aa,t)$, the vorticity obeys
\begin{align*}
\partial_t \omega(\XX(\aa,t), t) = (\partial_{x_1} \theta)(\XX(\aa,t), t).
\end{align*}
Integrating in time yields
\begin{align*}
\omega(\XX(\aa,t), t) = \omega_0(\aa) + \int_0^t (\partial_{x_1} \theta)(\XX(\aa,\tau), \tau)\,d\tau.
\end{align*}
Next, we rewrite $(\partial_{x_1} \theta)(\XX(\aa,\tau), \tau)$ in terms of the Lagrangian coordinates.
The equation for $\theta$ yields
\begin{align*}
\theta(\xx,t) =\theta_0(\AA(\xx,t)).
\end{align*}
Therefore, we have
\begin{align*}
(\partial_{x_1} \theta)(\xx,t) = \frac{\partial \theta_0}{\partial {a_1}}(\AA(\xx,t)) \frac{\partial A_1}{\partial {x_1}}(\xx,t)
+ \frac{\partial \theta_0}{\partial {a_2}}(\AA(\xx,t)) \frac{\partial A_2}{\partial {x_1}}(\xx,t),
\end{align*}
and letting $\xx=\XX(\aa,t)$ yields
\begin{align*}
(\partial_{x_1} \theta) (\XX(\aa,t),t) = \frac{\partial \theta_0}{\partial {a_1}}(\aa) \frac{\partial A_1}{\partial {x_1}}(\XX(\aa,t),t)
+ \frac{\partial \theta_0}{\partial {a_2}}(\aa) \frac{\partial A_2}{\partial {x_1}}(\XX(\aa,t),t).
\end{align*}
Upon using \eqref{eq:gradA:gradX} we arrive at
\begin{align*}
(\partial_{x_1} \theta) (\XX(\aa,t),t) = \partial_{a_1} \theta_0(\aa) \partial_{a_2} X_2(\aa,t)
- \partial_{a_2} \theta_0(\aa) \partial_{a_1} X_2(\aa,t) 
=\left\{\theta_0(\aa), X_2(\aa,t) \right\},
\end{align*}
and therefore 
\begin{align*}
\omega(\XX(\aa,t), t) = \omega_0(\aa) + \int_0^t \left\{\theta_0(\aa), X_2(\aa,\tau)\right\}\,d\tau.
\end{align*}
To obtain and equation just in terms of $\XX$, we recall
\begin{align*}
\frac{d \XX}{dt}(\aa,t) =\uu(\XX(\aa,t),t) = \frac1{2\pi} \int \frac{(\XX(\aa,t)-\XX(\bb,t))^\perp}{|(\XX(\aa,t)-\XX(\bb,t)|^2}\, \omega(\XX(\bb,t), t)\,d\bb
\end{align*}
Therefore,
\begin{align*}
\frac{d \XX}{dt}(\aa,t) 
&= \frac1{2\pi} \int \frac{(\XX(\aa,t)-\XX(\bb,t))^\perp}{|(\XX(\aa,t)-\XX(\bb,t)|^2}\, \omega_0(\bb)\, d\bb \notag \\
&\quad + \frac1{2\pi} \int \frac{(\XX(\aa,t)-\XX(\bb,t))^\perp}{|(\XX(\aa,t)-\XX(\bb,t)|^2}
\left( \int_0^t \left\{\theta_0(\bb), X_2(\bb,\tau)\right\} d\tau \right)d\bb.
\end{align*}
To derive the formula for $\partial_t \nabla \XX$, we  differentiate the kernel and obtain
\begin{align*}
\frac{d(\nabla_a \XX)}{dt} (\aa,t) 
&= \left(  \int \KK(\XX(\aa,t)-\XX(\bb,t))\,\omega_0(\bb) \, d\bb\right) \nabla_a \XX(\aa,t) \notag \\
&\quad +  \left(  \int \KK(\XX(\aa,t)-\XX(\bb,t)) \int_0^t  \left\{\theta_0(\bb), X_2(\bb,\tau)\right\}   \,d\tau\,d\bb\right)\nabla_a \XX(\aa,t) \notag \\
&\quad + \frac12\left(\omega_0(\aa) + \int_0^t \left\{\theta_0(\aa), X_2(\aa,\tau)\right\}\,d\tau\right)\,\left[\begin{array}{cc} 0  & -1\\ 1 & 0\end{array}  \right]\nabla_a \XX(\aa,t),
\end{align*}
where $\KK$ is given in  \eqref{eq:kernelK} above.

%In the third line of the above formula, we have replaced 
%\begin{align}
%\int_0^t \left\{\theta_0(\bb), X_2(\bb,\tau)\right\}\,d\tau
%\end{align}
%by 
%\begin{align}
%\int_0^t \left(\left\{\theta_0(\bb), X_2(\bb,\tau)\right\}-\left\{\theta_0(\aa), X_2(\aa,\tau)\right\}\right)\,d\tau
%\end{align}
%to obtain the desired cancelation.

\section{The composition of analytic functions: the one dimensional case}
\label{sec:1D}
The contents of this section is adapted from~\cite[Theorem 1.3.2]{KrantzParks02}, and is presented here for the sake of completeness. This serves as the motivation for the combinatorial machinery given in Section~\ref{sec:composition} above.

\begin{proposition}\label{prop:1D}
If $g\colon \RR \to \RR$ is bounded $h\colon \RR \to \RR$ is real analytic, and $g$ obeys the ODE
\begin{align}
g'(x) = h(g(x)),
\label{eq:ODE:1}
\end{align} 
then $g$ is in fact real analytic.
\end{proposition}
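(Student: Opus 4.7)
The strategy is to carry out the single-variable prototype of the argument that will become Theorem~\ref{thm:dD}. Analyticity is a local property, so fix an arbitrary $x_0 \in \RR$ and set $y_0 = g(x_0)$. Since $h$ is real analytic at $y_0$ and $g$ is bounded, I can choose constants $C, R > 0$ (depending on $y_0$) so that the Cauchy estimates $|h^{(j)}(y)| \le C \, j!/R^j$ hold for all $j \ge 0$ and all $y$ in a neighborhood $U$ of $y_0$, and simultaneously $|g(x)| \le C$ for $x$ in a neighborhood of $x_0$ (enlarging $C$ if needed). Because $g$ is continuous, $g(x) \in U$ for $x$ close to $x_0$. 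The goal is to show by induction on $n \geq 1$ the Cauchy bound
\begin{equation*}
|g^{(n)}(x)| \le (-1)^{n-1}\binom{1/2}{n} \frac{(2C)^{n}}{R^{n-1}}\, n!,
\end{equation*}
which, by~\eqref{eq:factorial:bound}, implies real analyticity of $g$ at $x_0$ with radius of analyticity at least $R/(2C)$.

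The base case $n=1$ follows from the ODE $g'(x)=h(g(x))$ and the bound $|h(g(x))| \le C = \binom{1/2}{1}\cdot 2C$. For the inductive step, I would differentiate the ODE $n$ times and apply the one-dimensional Fa\`a di Bruno formula (the scalar specialization of Lemma~\ref{lem:multi:Faa}) to write
\begin{equation*}
g^{(n+1)}(x) = \partial_x^{n}\bigl[h(g(x))\bigr] = n! \sum_{k=1}^{n} h^{(k)}(g(x)) \sum_{P_s(n,k)} \prod_{j=1}^{s} \frac{\bigl(g^{(\ell_j)}(x)\bigr)^{k_j}}{(k_j!)(\ell_j!)^{k_j}},
\end{equation*}
where $P_s(n,k)$ is the scalar version of the set in~\eqref{eq:Ps:def} with the constraints $\sum_j k_j = k$ and $\sum_j k_j \ell_j = n$. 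Inserting the analyticity bound for $h^{(k)}$ together with the inductive hypothesis for each $g^{(\ell_j)}$, the powers of $R$, $C$, and $\ell_j!$ collect so that the only remaining combinatorial quantity is
\begin{equation*}
\sum_{k=1}^{n} (-1)^{k} k! \sum_{P_s(n,k)} \prod_{j=1}^{s} \frac{\binom{1/2}{\ell_j}^{k_j}}{k_j!}.
\end{equation*}

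The crux is therefore a one-dimensional \emph{magic identity} equating the above sum to $2(n+1)\binom{1/2}{n+1}$, which is the scalar case of Lemma~\ref{lem:multi:magic}. I would prove it by the same diagonal trick: applying Fa\`a di Bruno to the composition $F(t) = K(Z(t)) = (1-2t)^{-1/2}$, with $Z(t) = 1-\sqrt{1-2t}$ and $K(z) = 1/(1-z)$, and matching the resulting expansion against the closed form $F^{(n)}(0) = -(n+1)!\,\binom{1/2}{n+1}(-2)^{n+1}$ using $Z^{(\ell)}(0) = -\binom{1/2}{\ell}(-2)^{\ell}\ell!$ and $K^{(k)}(0) = k!$. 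Once this identity is in hand, the inductive estimate closes exactly as in~\eqref{eq:SQG:p:6}, yielding the desired Cauchy bound and hence real analyticity at $x_0$.

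The main obstacle is not analytic but combinatorial: producing the magic identity in the precise form that matches what the Fa\`a di Bruno step spits out, with the binomial coefficients $\binom{1/2}{\ell_j}$ encoding the specific factorial growth. The reason for choosing the peculiar constants $(2C)^n$ and $\binom{1/2}{n}$ in the induction hypothesis is precisely to set up this identity; any weaker Ansatz (for instance the na\"ive $|g^{(n)}(x)| \le n! M^n$) would lose the constant factors in the sum over partitions and fail to close the induction.
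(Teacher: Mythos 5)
Your proposal is correct and follows essentially the same route as the paper's Appendix~\ref{sec:1D}: Cauchy estimates for $h$, the modified inductive Ansatz $|g^{(n)}|\le (-1)^{n-1}{1/2 \choose n}(2C)^n R^{-(n-1)} n!$, the one-dimensional Fa\`a di Bruno formula, and the magic identity proved by the diagonal trick with $F(t)=(1-2t)^{-1/2}$. The only (immaterial) difference is that you index the partition sum by the scalar specialization of $P_s(n,\alp)$ from Lemma~\ref{lem:multi:Faa} rather than by the classical multiplicity vectors $P(n,k)$ of Lemma~\ref{lem:FaaDiBruno}; these are equivalent parametrizations.
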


\begin{lemma}[\bf One-dimensional Fa\`a di Bruno formula]
\label{lem:FaaDiBruno}
Let $I \subset \RR$ be an open interval, $g\in C^\infty(I)$, and $h\in C^\infty(J)$, where $J= f(I)$. Let $f = h\circ g$. Then for all $n\geq 1$ we have
\begin{align*}
f^{(n)}(x) = {\sum_{k=1}^n} {h^{(k)}(g(x)) } \sum_{\kk \in P(n;k)} \frac{n!}{\kk !} \prod_{j=1}^{n} \left( \frac{g^{(j)}(x)}{j!} \right)^{k_j}
\end{align*}
where $\kk = (k_1,\ldots,k_n)$ is a multi-index,
\begin{align*}
P(n,k) = \left\{ \kk=(k_1,\ldots,k_n) \colon \sum_{j=1}^{n} j k_j = n, \sum_{j=1}^{n} k_j = k \right\}
\end{align*}
and we use the notation
\begin{align*}
\kk! = k_1! \ldots k_n!
\end{align*}
\end{lemma}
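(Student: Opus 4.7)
My plan is to prove the formula by combining a formal Taylor expansion with the multinomial theorem, which makes the combinatorial structure of the sum over $P(n,k)$ transparent. Both sides of the identity are finite polynomial expressions in the data $g^{(j)}(x)$, $j=1,\ldots,n$ and $h^{(k)}(g(x))$, $k=1,\ldots,n$, so equality can be verified at the level of jets of order $n$ at the point $x$.

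First I would write, up to order $t^n$ in an auxiliary variable $t$,
\[
g(x+t) - g(x) = \sum_{j=1}^{n} \frac{g^{(j)}(x)}{j!} t^j + O(t^{n+1}),
\]
and then apply Taylor's theorem to $h$ about the point $g(x)$ to obtain
\[
h(g(x+t)) = \sum_{k=0}^{n} \frac{h^{(k)}(g(x))}{k!} \bigl(g(x+t)-g(x)\bigr)^k + O(t^{n+1}).
\]
The $k=0$ term does not contribute to the coefficient of $t^n$ when $n\geq 1$. For each $k\geq 1$, the multinomial theorem gives
\[
\bigl(g(x+t)-g(x)\bigr)^k = \sum_{\substack{\kk\in\NN_0^n\\ k_1+\cdots+k_n=k}} \frac{k!}{\kk!}\prod_{j=1}^n \left(\frac{g^{(j)}(x)}{j!}\right)^{k_j} t^{\sum_j j\, k_j} + O(t^{n+1}).
\]
Extracting the coefficient of $t^n$ imposes the additional constraint $\sum_j j k_j = n$, which together with $\sum_j k_j = k$ is precisely the definition of $P(n,k)$. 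The $k!$ in the denominator of Taylor's formula cancels the $k!$ from the multinomial coefficient, and by Taylor's theorem the coefficient of $t^n$ in $f(x+t)$ equals $f^{(n)}(x)/n!$. Multiplying by $n!$ then yields the claimed identity.

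The main obstacle is the book-keeping under merely $C^\infty$ regularity: the formal series above need not converge, so the derivation must be interpreted as an identity of $n$-jets at $x$ rather than of convergent power series. To make this rigorous I would use the Peano (or integral) form of the remainder in Taylor's theorem at each stage, so that all manipulations happen modulo $O(t^{n+1})$ and the final extraction of the $t^n$ coefficient is justified. An alternative, self-contained route is induction on $n$: the base case $n=1$ reduces to the chain rule $f'(x)=h'(g(x))g'(x)$, and the inductive step requires verifying that differentiating the $n$-th order formula and merging the two sources of new terms — namely the one from differentiating $h^{(k)}(g(x))$ (which raises $k$ to $k+1$ and contributes $g'$) and the one from differentiating a factor $(g^{(j)}(x)/j!)^{k_j}$ (which replaces a part of size $j$ by a part of size $j+1$) — rearranges into $\sum_{\kk'\in P(n+1,k)} (n+1)!/\kk'!$ times the expected monomial. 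I would opt for the Taylor/multinomial route, since it explains the combinatorial shape of $P(n,k)$ instead of merely verifying the coefficients after the fact.
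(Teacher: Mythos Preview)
The paper does not actually prove this lemma; it is stated in Appendix~B as a classical result adapted from \cite{KrantzParks02} and then used without further justification. Your argument via Taylor expansion modulo $O(t^{n+1})$ combined with the multinomial theorem is one of the standard proofs and is correct: you have correctly identified that both sides depend only on the $n$-jets of $g$ at $x$ and of $h$ at $g(x)$, so working with Taylor polynomials with Peano remainder is legitimate, and the constraint $\sum_j j k_j = n$, $\sum_j k_j = k$ arises exactly from extracting the $t^n$ coefficient. One small point worth making explicit is that for $k>n$ the term $(g(x+t)-g(x))^k = O(t^k) = O(t^{n+1})$ contributes nothing, which is why the outer sum stops at $k=n$; you implicitly use this but do not state it. The inductive alternative you sketch is also standard and would work.
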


A consequence of the Fa\`a di Bruno formula is the following identity, as given in~\cite[Lemma 1.5.2]{KrantzParks02}.
\begin{lemma}[\bf One-dimensional magic identity]
\label{lem:Magic}
For each integer $n\geq 1$ we have
\begin{align*}
{\sum_{k=1}^n} \sum_{\kk \in P(n,k)} \frac{(-1)^k k!}{\kk!} \prod_{j=1}^n {1/2 \choose j} ^{k_j} = 2 (n+1) {1/2 \choose n+1}.
\end{align*}
\end{lemma}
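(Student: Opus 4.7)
The plan is to mimic the proof of the multivariate magic identity (Lemma~\ref{lem:multi:magic}): pick a clever pair of smooth functions whose composition has a derivative sequence at $0$ that we can compute in two different ways, one side producing the combinatorial sum on the left of the identity, and the other side producing $2(n+1)\binom{1/2}{n+1}$ in closed form.

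Concretely, I would set
\[
g(x) = 1 - \sqrt{1-2x}, \qquad h(y) = \frac{1}{1-y},
\]
so that $g(0) = 0$ and, by the generalized binomial theorem,
\[
g^{(j)}(0) = -\binom{1/2}{j}(-2)^{j}\, j!, \qquad h^{(k)}(0) = k!.
\]
Then $f(x) := h(g(x)) = (1-2x)^{-1/2}$, whose Taylor coefficients are also available directly:
\[
f^{(n)}(0) = \binom{-1/2}{n}(-2)^{n}\, n!.
\]
A short computation with the falling factorial shows $\binom{-1/2}{n} = 2(n+1)\binom{1/2}{n+1}$, so
\[
f^{(n)}(0) = 2(n+1)\binom{1/2}{n+1}(-2)^{n}\, n!.
\]

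Next I would apply the one-dimensional Fa\`a di Bruno formula (Lemma~\ref{lem:FaaDiBruno}) to $f = h\circ g$ at $x=0$, obtaining
\[
f^{(n)}(0) = \sum_{k=1}^{n} h^{(k)}(0) \sum_{\kk\in P(n,k)} \frac{n!}{\kk!}\prod_{j=1}^{n}\left(\frac{g^{(j)}(0)}{j!}\right)^{k_j}.
\]
Substituting the values of $h^{(k)}(0)$ and $g^{(j)}(0)$ and pulling out the sign and power-of-two factors using $\sum_j k_j = k$ and $\sum_j jk_j = n$, every term acquires the common factor $(-2)^n\, n!$, leaving
\[
f^{(n)}(0) = (-2)^n\, n! \sum_{k=1}^{n} (-1)^k \sum_{\kk\in P(n,k)} \frac{k!}{\kk!}\prod_{j=1}^{n}\binom{1/2}{j}^{k_j}.
\]
Equating the two expressions for $f^{(n)}(0)$ and dividing by $(-2)^n\, n!$ yields exactly the claimed identity.

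The only non-routine step is the bookkeeping of signs and powers of $2$ in the Fa\`a di Bruno sum, which is slightly delicate but falls out cleanly from the two identities $\sum_j k_j = k$ and $\sum_j j k_j = n$ defining the set $P(n,k)$. I would also briefly note that both $g$ and $h$ are real analytic at $0$ with radius of convergence $1/2$ and $1$ respectively, so all the manipulations are justified for $x$ in a neighborhood of $0$, and that the identity between Taylor coefficients in fact needs only formal power series validity, which is automatic.
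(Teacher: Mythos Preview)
Your proposal is correct and is precisely the one-dimensional specialization of the paper's proof of Lemma~\ref{lem:multi:magic}: the same auxiliary functions $g(x)=1-\sqrt{1-2x}$, $h(y)=1/(1-y)$, and $f=h\circ g=(1-2x)^{-1/2}$ are used, and the identity follows by comparing the direct computation of $f^{(n)}(0)$ with its Fa\`a di Bruno expansion. The paper itself does not reprove this lemma but refers to \cite[Lemma~1.5.2]{KrantzParks02}, whose argument is exactly the one you describe.
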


\begin{proof}[Proof of Proposition~\ref{prop:1D}]
The assumption that $h$ is real analytic translates into the fact that there {exists} $C,R>0$ such that
\begin{align}
| h^{(k)}(y) | \leq C \frac{k!}{R^k}
\label{eq:h}
\end{align}
for all $k \geq 0$, and all $y$ close to some $y_0$.

We make the following inductive assumption on the function $g$: that for all {$1\leq j \leq n$ it holds that}
\begin{align}
|g^{(j)}(x)| \leq \frac{1}{R} j! (-1)^{j-1} {1/2 \choose j} \left(\frac{2C}{R} \right)^j
\label{eq:g:assumption}
\end{align}
at all points $x$ sufficiently close to some $x_0$.

Let $n\geq 0$. We apply $n$ derivatives to the equation \eqref{eq:ODE} and use Lemma~\ref{lem:FaaDiBruno} to obtain
\begin{align*}
g^{(n+1)}(x) = \sum_{\kk \in P(n;k)} \frac{n!}{\kk !} h^{(k)}(g(x)) \prod_{j=1}^{n} \left( \frac{g^{(j)}(x)}{j!} \right)^{k_j}
\end{align*}
We appeal to \eqref{eq:h} and the inductive assumption \eqref{eq:g:assumption} to estimate
\begin{align*}
|g^{(n+1)}|
&\leq C \sum_{\kk \in P(n;k)} \frac{n!}{\kk !} \frac{k!}{R^k} \prod_{j=1}^{n} \left( (-1)^{j-1} {1/2 \choose j} \frac{(2C)^j}{R^{j-1}} \right)^{k_j}.
\end{align*}
Using that $\sum_j k_j = k$ and ${\sum_j} j k_j = n$ we obtain that
\begin{align*}
|g^{(n+1)}| &\leq C n! (-1)^n \frac{(2C)^n}{R^n} \sum_{\kk \in P(n;k)} \frac{(-1)^k k!}{\kk !}  \prod_{j=1}^{n} {1/2 \choose j}^{k_j}.
\end{align*}
Using the identity given in Lemma~\ref{lem:Magic} we thus obtain
\begin{align*}
|g^{(n+1)}| &\leq C n! (-1)^n \frac{(2C)^n}{R^n}  2 (n+1) {1/2 \choose n+1}\notag\\
&= (n+1)! (-1)^n \frac{(2C)^{n+1}}{R^n} {1/2 \choose n+1}
\end{align*}
which is exactly \eqref{eq:g:assumption} at level $n+1$. This completes the proof since in view of \eqref{eq:factorial:bound}, the bound \eqref{eq:g:assumption} gives
\begin{align*}
|g^{(j)}(x)| \leq \frac{C}{R} \frac{j!}{(R/C)^j}
\end{align*}
which shows that $g$ is real analytic with radius of convergence $R/C$.
\end{proof}

\section*{Acknowledgments} 
The work of PC was supported in part by the NSF grants DMS-1209394 and DMS-1265132, 
VV was supported in part by the NSF grant DMS-1211828, 
while the work of JW was supported in part by the NSF grant DMS-1209153.

%%%%%%%%%%%%%%%% BIBLIOGRAPHY %%%%%%%%%%%%%%%%%%%%%%%%%%%%%%%%%%%%%%%%%%%

\end{document}